\documentclass[11pt,a4paper]{article}
\usepackage{amssymb}
\usepackage{eurosym}
\usepackage{amsfonts}
\usepackage{amsmath}
\usepackage{amsthm}
\usepackage{graphicx}
\usepackage{float}
\usepackage{hyperref}
\usepackage[pagewise]{lineno}

\hypersetup{
	colorlinks=true,
	linkcolor=blue,
	anchorcolor=blue,
	citecolor=blue
}
\newtheorem{theorem}{Theorem}[section]

\newtheorem{corollary}[theorem]{Corollary}
\newtheorem{lemma}[theorem]{Lemma}

\theoremstyle{definition}
\newtheorem{definition}[theorem]{Definition}

\newtheorem{remark}[theorem]{Remark}

\newtheorem*{da}{Data availability}
\renewenvironment{proof}[1][Proof]{\noindent\textbf{#1.} }{\ \rule{0.5em}{0.5em}}

\renewcommand{\theequation}{\thesection.\arabic{equation}}
\allowdisplaybreaks
\input{tcilatex}

\def\func#1{\mathop{\mathrm{#1}}\nolimits}

\def\Xint#1{\mathchoice
{\XXint\displaystyle\textstyle{#1}}%
{\XXint\textstyle\scriptstyle{#1}}%
{\XXint\scriptstyle\scriptscriptstyle{#1}}%
{\XXint\scriptscriptstyle\scriptscriptstyle{#1}}%
\!\int}
\def\XXint#1#2#3{{\setbox0=\hbox{$#1{#2#3}{\int}$ }
\vcenter{\hbox{$#2#3$ }}\kern-.6\wd0}}

\def\oint{\Xint-}

\def\enddoc{\end{document}}

\def\FRAME#1#2#3#4#5#6#7#8
{
 \begin{figure}[H]
 \begin{center}
 \includegraphics[height=#3]{#7}
 \caption{#5}
 \label{#6}
 \end{center}
 \end{figure}
}

\begin{document}
	\title{Long time upper bounds for solutions of Leibenson’s equation on Riemannian manifolds}
	\author{Philipp S\"urig}
	\date{April 2026}
	\maketitle
	
	\begin{abstract}
		We consider on Riemannian manifolds the Leibenson equation $$\partial _{t}u=\Delta _{p}u^{q}.$$ We prove that a certain upper bound for weak solutions of this equation is equivalent to a euclidean-type Sobolev inequality.
		
	\end{abstract}
	
	\let\thefootnote\relax\footnotetext{\textit{\hskip-0.6truecm 2020 Mathematics Subject Classification.} 35K55, 58J35, 35K92. \newline
		\textit{Key words and phrases.} Leibenson equation, doubly nonlinear
		parabolic equation, Riemannian manifold, Sobolev inequality. \newline
		The author was funded by the Deutsche Forschungsgemeinschaft (DFG,
		German Research Foundation) - Project-ID 317210226 - SFB 1283.}
	
	\tableofcontents
	
	\section{Introduction}
	
	Let $M$ be a Riemannian manifold. We consider solutions of the non-linear evolution
	equation 
	\begin{equation}
		\partial _{t}u=\Delta _{p}u^{q},  \label{evoeq}
	\end{equation}%
	where $p>1, q>0,$ $u=u(x,t)$ is an unknown non-negative function of $x\in M$, $t\geq0$ and $%
	\Delta _{p}$ is the Riemannian $p$-Laplacian 
$\Delta _{p}v=\func{div}\left( |\nabla v|^{p-2}\nabla v\right).$
For the physical meaning of (\ref{evoeq}) see \cite{grigor2024finite, leibenzon1945general, leibenson1945turbulent}. 

The equation (\ref{evoeq}) is also referred to as \textit{Leibenson equation} or a \textit{doubly non-linear parabolic equation}. In the case $p=2$, it becomes a \textit{porous medium} equation $\partial _{t}u=\Delta u^{q}$,
and if in addition \(q=1\) then it amounts to the classical \textit{heat equation} $\partial _{t}u=\Delta u$.

Let $M$ be a geodesically complete Riemannian manifold of dimension $n$ and denote by $\mu$ the \textit{Riemannian measure} on $M$.
It was proved by Varopoulos \cite{varopoulos1985hardy} (see also \cite{grigor2006heat}) that, provided $n>2$, the Sobolev inequality 
\begin{equation}\label{weightsobointp2}\left(\int_{M}{|v|^{\frac{2n}{n-2}} d\mu}\right)^{\frac{n-2}{n}}\leq C\int_{M}{|\nabla v|^{2}d\mu} \quad \textnormal{for all}~v\in W^{1, 2}(M),\end{equation} is equivalent to the following upper bound for the solution $u$ of the heat equation with initial function $u_{0}=u(\cdot, 0)$: \begin{equation}\label{heatkernupp}||u(t)||_{L^{\infty}(M)}\leq ||u_{0}||_{L^{1}(M)}t^{-\frac{n}{2}}.\end{equation}

In the present paper, we prove a similar equivalence for solutions of the Leibenson equation (\ref{evoeq}).
Let us denote \begin{equation}\label{Dint}D=1-q(p-1).\end{equation}

The main result of the present paper is as follows  (cf. Theorem \ref{thmfinexinf} and Theorem \ref{sobolev}). 	

\begin{theorem}\label{mainthmint}
Assume that \begin{equation}\label{beta}p>nD.\end{equation}	
Then the Sobolev inequality \begin{equation}\label{weightsoboint}\left(\int_{M}{|v|^{\frac{pn}{n-p}} d\mu}\right)^{\frac{n-p}{n}}\leq C\int_{M}{|\nabla v|^{p}d\mu} \quad \textnormal{for all}~v\in W^{1, p}(M),\end{equation}	
assuming that $n>p$, is equivalent to the following estimate: for any non-negative bounded solution of (\ref{evoeq}) in $M\times \mathbb{R}_{+}$ with $u(\cdot, 0)=u_{0}\in L^{1}(M)\cap L^{\infty}(M)$ and, for  any $t>0$, \begin{equation}\label{decayofinf}||u(t)||_{L^{\infty}(M)}\leq C ||u_{0}||_{L^{1}(M)}^{\frac{p}{p-nD}}t^{-\frac{n}{p-nD}},\end{equation}where $C=C(p, q, n)$.
\end{theorem}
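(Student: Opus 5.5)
The proof splits into the two implications, which I would handle separately.

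\textbf{Sobolev $\Rightarrow$ decay.} I would run a Moser-type / $L^1$--$L^\infty$ argument built on energy estimates. Testing (\ref{evoeq}) with $u^{s}$ (justified by the weak formulation, using truncations if needed) gives
\[
\frac{d}{dt}\int_M u^{1+s}\,d\mu=-c(s,p,q)\int_M |\nabla u^{\gamma}|^p\,d\mu,\qquad \gamma=\frac{s+q(p-1)}{p}.
\]
I would combine this with (\ref{weightsoboint}) and with H\"older interpolation between $L^1$ and the Sobolev exponent $pn/(n-p)$, using that $\|u(t)\|_{L^1}\le\|u_0\|_{L^1}$ (the $L^1$ contraction/mass conservation for non-negative solutions). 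This yields a Nash-type differential inequality
\[
y'\le -c\,y^{1+\theta}\|u_0\|_{L^1}^{-\kappa}
\]
for $y=\|u(t)\|_{L^{r}}^{r}$. Integrating it gives $L^1\to L^r$ decay. The exponent $\theta$ is positive exactly when $p>nD$, which is where (\ref{beta}) enters. To reach $L^\infty$, I would iterate over $r_k\to\infty$ on dyadic time intervals $[t(1-2^{-k}),t]$ in a De Giorgi/Moser scheme, keeping track of the constants so that the product converges. Alternatively, I would use an $L^r$--$L^\infty$ estimate from a mean-value inequality. The final exponents in (\ref{decayofinf}) are forced by scaling: if $u$ solves (\ref{evoeq}), then $\lambda u(\lambda^{a}x,\lambda^{b}t)$ is again a solution when $b=pa-D\ldots$, and matching the $L^1$ norm fixes $p/(p-nD)$ and $n/(p-nD)$. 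I expect the main obstacle to be this direction, and specifically controlling the iteration constants uniformly in $k$ for the doubly nonlinear structure.

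\textbf{Decay $\Rightarrow$ Sobolev.} I would first derive a Faber--Krahn inequality $\lambda_{1,p}(\Omega)\ge c\,\mu(\Omega)^{-p/n}$ for precompact open $\Omega$. To do this I take $u_0=\mathbf 1_\Omega$ (or a smooth approximation) and consider the solution. The bound (\ref{decayofinf}) gives $\|u(t)\|_\infty\le C\mu(\Omega)^{p/(p-nD)}t^{-n/(p-nD)}$, which is at most $1/2$ for $t\ge t_0\simeq\mu(\Omega)^{p/n}$.

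On the other hand, comparison with a separable subsolution gives a lower bound. In $\Omega$, with Dirichlet data, a solution of the form $u=T(t)\phi^{1/q}$ is available when $\phi$ is the first $p$-eigenfunction, and $T$ solves $T'=-\lambda T^{q(p-1)}$. This yields a lower bound on $u$ in $\Omega$ that persists until time $\sim 1/\lambda_{1,p}(\Omega)$. Comparing this with the upper bound at time $t_0$ forces $\lambda_{1,p}(\Omega)\gtrsim\mu(\Omega)^{-p/n}$.

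Finally, the Faber--Krahn inequality with exponent $p/n$ and $n>p$ implies (\ref{weightsoboint}). This follows from the standard truncation argument of Bakry--Coulhon--Ledoux--Saloff-Coste (slicing $v$ into level bands $\min((v-2^k)_+,2^k)$), which I would cite rather than reprove.

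The delicate point in this direction is making the lower-bound comparison quantitative. Care is needed in the regimes $D>0$, where $T$ extinguishes in finite time, and $D<0$, where $T$ decays polynomially. In both regimes one has to check that $\phi^{1/q}$ is a valid subsolution, which requires normalizing $\phi$ in $L^\infty$.
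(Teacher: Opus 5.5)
\textbf{The gap: the converse direction fails for $D>0$.} Your separable subsolution does not work when $D>0$. The function $T(t)\phi^{1/q}$ is not a solution unless $q(p-1)=1$. Indeed, $(T\phi^{1/q})^{q}=T^{q}\phi$ and $-\Delta_p\phi=\lambda_{1,p}\phi^{p-1}$ give $\Delta_p(T\phi^{1/q})^{q}=-\lambda_{1,p}T^{q(p-1)}\phi^{p-1}$, whereas $\partial_t(T\phi^{1/q})=T'\phi^{1/q}$. With $T'=-\lambda T^{q(p-1)}$, it is a subsolution if and only if $\lambda\ge\lambda_{1,p}(\Omega)\,\phi^{-D/q}$ throughout $\Omega$.

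For $D\le 0$ and $\|\phi\|_{L^\infty}=1$ this holds with $\lambda=\lambda_{1,p}(\Omega)$, and your Faber--Krahn argument goes through. You would still need to import a comparison principle between the solution on $M$ and a Dirichlet subsolution on $\Omega$, which the paper does not provide. For $D>0$, however, $\phi^{-D/q}\to\infty$ at $\partial\Omega$, so no finite $\lambda$ works. The $L^\infty$ normalization does not help, because the failure comes from $\phi$ vanishing on the boundary. This is not a marginal case: $0<D<p/n$ contains the singular $p$-Laplacian range ($q=1$, $2n/(n+1)<p<2$) and the fast diffusion range ($p=2$, $(n-2)/n<q<1$). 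As written, you prove decay $\Rightarrow$ Sobolev only for $D\le 0$.

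\textbf{How the paper gets the lower bound.} The missing idea is a lower bound on $u(t)$ that does not come from a subsolution; the paper gets it from energy. Lemma \ref{Lem1} with $\lambda=1+q$, together with the monotonicity of $t\mapsto\int_M|\nabla u^{q}|^{p}\,d\mu$ (Lemma \ref{nabdecreasing}), gives $\int_M u(t)^{1+q}\ge\int_M u_0^{1+q}-c_1t\int_M|\nabla u_0^{q}|^{p}$. On the other side, (\ref{decayofinf}) interpolated with $\|u(t)\|_{L^1}\le\|u_0\|_{L^1}$ gives $\int_M u(t)^{1+q}\le C\bigl(\int_M u_0\bigr)^{\zeta}t^{-\beta}$ for explicit $\zeta,\beta>0$. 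Optimizing $C\bigl(\int_M u_0\bigr)^{\zeta}t^{-\beta}+c_1t\int_M|\nabla u_0^{q}|^{p}\ge\int_M u_0^{1+q}$ over $t>0$ yields a Gagliardo--Nirenberg inequality for $v=u_0^{q}$. Then \cite{bakry1995sobolev} upgrades it to (\ref{weightsoboint}). This argument is insensitive to the sign of $D$ and needs neither eigenfunctions nor comparison.

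\textbf{Your first implication.} This is a sound alternative to the paper's log-Sobolev/Gross argument with $\lambda(s)=at/(t-s)$. The Nash exponent is positive precisely because every lower exponent $\ell\ge 1$ exceeds $D/\nu$, and the Moser product converges. Two small corrections:
\begin{itemize}
\item Sobolev applied to $u^{\gamma}$ controls $\|u\|_{L^{\gamma p\kappa}}$, not $\|u\|_{L^{pn/(n-p)}}$, so that is the norm you interpolate with.
\item On a manifold the exponents in (\ref{decayofinf}) are fixed by the scaling $u\mapsto cu(x,c^{-D}t)$ together with the metric dilation $g\mapsto c^{2}g$. A dilation in $x$ is not available.
\end{itemize}
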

	
For example, inequality (\ref{weightsoboint}) holds if $M$ is a \textit{Cartan-Hadamard manifold} \cite{hoffman1974sobolev}, that is, $M$ is simply connected and has everywhere non-positive sectional curvature.

Let us recall some previously known results. 

In the case $q=1$, that is, (\ref{evoeq}) becomes the parabolic \textit{p-Laplace} equation, Theorem \ref{mainthmint} was proved by M. Bonforte and G. Grillo \cite{bonforte2007singular}.

When $p=2$, that is, (\ref{evoeq}) becomes the porous medium equation, Theorem \ref{mainthmint} was proved by M. Bonforte, G. Grillo, and J. L. Vazquez \cite{bonforte2008fast}.

Hence, the novelty of our Theorem \ref{mainthmint} is that it applies to the full range of $p>1$ and $q>0$ satisfying (\ref{beta}).

Let us discuss the upper bound (\ref{decayofinf}) in the special case when $M=\mathbb{R}^{n}$. In this case G. I. Barenblatt \cite{barenblatt1952self}
constructed for all $p>1$, $q>0$ spherically symmetric self-similar solutions of (\ref{evoeq}), that are nowadays called \textit{Barenblatt solutions}.
If (\ref{beta}) holds then the Barenblatt solution satisfies the estimate \begin{equation}\label{upperBar}||u(t)||_{L^{\infty}(\mathbb{R}^{n})}\leq C ||u_{0}||_{L^{1}(\mathbb{R}^{n})}^{\frac{p}{p-nD}}t^{-\frac{n}{p-nD}},\end{equation} which matches our estimate (\ref{decayofinf}) (see also \cite{bonforte2006super}). 

For further upper bounds for solutions of (\ref{evoeq}) on Riemannian manifolds in the case $D\geq 0$ we refer to \cite{grigor2026sharp, Grigoryan2024a, surig2024sharp}.

Let us also make a comment on the proof of the upper bound (\ref{decayofinf}) in Theorem \ref{mainthmint}. The proof utilizes a \textit{$\log$-Sobolev} inequality, which follows from the Sobolev inequality (\ref{weightsoboint}) (cf. Lemma \ref{logsobolev}). This method is inspired by \cite{bonforte2007singular, bonforte2008fast} (see also \cite{cipriani2001uniform}). 

The structure of this paper is as follows.

In Section \ref{secweak}, we
define the notion of a weak solution of the Leibenson equation (\ref{evoeq}).

In Section \ref{Secini} we prove in Theorem \ref{thmfinexinf} that the Sobolev inequality (\ref{weightsoboint}) implies the upper bound (\ref{decayofinf}).

In Section \ref{secsobo} we prove in Theorem \ref{sobolev} the opposite direction, that is, the upper bound (\ref{decayofinf}) implies the Sobolev inequality (\ref{weightsoboint}).

For other qualitative and quantitative properties of solutions of (\ref{evoeq}) on Riemannian manifolds see \cite{meglioli2025global, surig2024finite, surig2025gradient} and from a probabilistic point of view \cite{barbu2025leibenson}.
	
	\section{Weak solutions}
	
	\label{secweak}
	
	We consider in what follows the following non-linear evolution
	equation on a Riemannian manifold $M$:%
	\begin{equation}
		\partial _{t}u=\Delta _{p}u^{q}.  \label{dtv}
	\end{equation} 
	By a \textit{solution} of (\ref{dtv}) we mean a non-negative function $u$
	satisfying (\ref{dtv})
	in a certain weak sense as explained below.
	
	We assume throughout that 
	\begin{equation*}
		p>1\ \ \text{and}\ \ \ q>0.
	\end{equation*}%
	Set%
	\begin{equation}\label{D}
		D =1-q(p-1).
	\end{equation}%
Let $I$ be an interval in $\mathbb{R}_{+}=[0, \infty)$.
	\begin{definition}
		\normalfont
		We say that a non-negative function $u=u(x, t)$ is a \textit{weak
			solution} of (\ref{dtv}) in $M\times I$, if 
		\begin{equation}  \label{defvonsoluq}
			u\in C\left(I; L^{1+q}(M)\right)\quad \textnormal{and}\quad 
			u^{q}\in L_{loc}^{p}\left(I; W^{1, p}(M)\right),
		\end{equation} such that $$\partial_{t}u^{\frac{1+q}{2}}\in L^{2}_{loc}(I, L^{2}(M)),$$ where (\ref{dtv}) holds weakly in $M\times I$, which means that for all $t_{1}, t_{2}\in I$ with $t_{1}<t_{2}$, and all \textit{test functions} 
		\begin{equation}  \label{defvontestsoluq}
			\psi\in W_{loc}^{1, 1+\frac{1}{q}}\left(I;
			L^{1+\frac{1}{q}}(M)\right)\cap L_{loc}^{p}\left(I; W^{1,
				p}(M)\right),
		\end{equation}
		we have 
		\begin{equation}  \label{defvonweaksolq}
			\left[\int_{M}{u\psi}\right]_{t_{1}}^{t_{2}}+\int_{t_{1}}^{t_{2}}{%
	\int_{M}{-u\partial_{t}\psi+|\nabla u^{q}|^{p-2}\langle\nabla u^{q},
		\nabla \psi\rangle}}= 0.
		\end{equation}
	\end{definition} 

Existence results for weak solutions of (\ref{dtv}) were obtained in \cite{alt1983quasilinear,andreucci1990new, benilan1995strong, coulhon2016regularisation, ishige1996existence, ivanov1997regularity, raviart1970resolution,  tsutsumi1988solutions} in the euclidean setting and in \cite{andreucci2015optimal, de2022wasserstein, surig2026existence} on Riemannian manifolds.
	
	The next two lemma can be proved similarly to Lemma 2.6 in \cite{grigor2024finite}.
	\begin{lemma}[Caccioppoli type inequality]
		\label{Lem1} Let $u=u\left( x,t\right) $ be a bounded
		non-negative solution to \emph{(\ref{dtv})} in $M\times I$. Fix some real $\lambda $
		such that 
		\begin{equation}
			\lambda \geq 1+q  \label{la>3-m}
		\end{equation}%
	and set $\sigma=\lambda-D.$
		Choose $t_{1},t_{2}\in I$ such that $t_{1}<t_{2}$. Then%
		\begin{equation}
			\left[ \int_{M }u^{\lambda } d\mu\right] _{t_{1}}^{t_{2}}+c_{1}\lambda\left( \lambda  -1\right)\sigma ^{-p}
			\int_{M\times [t_{1}, t_{2}]}\left\vert \nabla \left( u^{\sigma/p } \right) \right\vert
			^{p}d\mu dt= 0,
			\label{vetacor}
		\end{equation}
		where $c_{1}$ is a constant depending on $p$ and $q$. 
	\end{lemma}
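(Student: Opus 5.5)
The plan is to test the weak formulation (\ref{defvonweaksolq}) with $\psi=u^{\lambda-1}$, which is the standard energy identity for (\ref{dtv}). First I would check that this $\psi$ is admissible in the sense of (\ref{defvontestsoluq}). Since $u$ is bounded and $\lambda-1\geq q$, the function $u^{\lambda-1}=(u^{q})^{(\lambda-1)/q}$ is a Lipschitz function of $u^{q}$ on the range of $u$, because $(\lambda-1)/q\geq1$. Hence $u^{\lambda-1}\in L^{p}_{loc}(I;W^{1,p}(M))$.

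For the time regularity I would write $u^{\lambda-1}=(u^{\frac{1+q}{2}})^{\frac{2(\lambda-1)}{1+q}}$. Then
\[
\partial_t u^{\lambda-1}=c\,u^{\lambda-1-\frac{1+q}{2}}\,\partial_t u^{\frac{1+q}{2}},
\]
and the prefactor is bounded because $\lambda-1\geq\frac{1+q}{2}$ follows from $\lambda\geq 1+q$ when $q\geq1$. In the remaining case one uses an approximation $\psi=(u+\varepsilon)^{\lambda-1}-\varepsilon^{\lambda-1}$, or Steklov averages in time, and passes to the limit. I expect this rigorous justification to be the main obstacle: the time derivative term must produce $\frac1\lambda[\int u^\lambda]_{t_1}^{t_2}$, which needs a chain rule $\int u\,\partial_t u^{\lambda-1}=\frac{\lambda-1}{\lambda}\partial_t\int u^{\lambda}$. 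Formally this holds since
\[
u\,\partial_t u^{\lambda-1}=(\lambda-1)u^{\lambda-1}\partial_t u=\tfrac{\lambda-1}{\lambda}\partial_t u^\lambda .
\]
Rigorously I would establish it via Steklov averaging $u_h(t)=\frac1h\int_t^{t+h}u$ together with the $L^2$ bound on $\partial_t u^{\frac{1+q}{2}}$, following the argument of Lemma 2.6 in \cite{grigor2024finite}.

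With the test function in hand, (\ref{defvonweaksolq}) gives
\[
\left[\int u^{\lambda}\right]_{t_1}^{t_2}-\tfrac{\lambda-1}{\lambda}\left[\int u^{\lambda}\right]_{t_1}^{t_2}+\int\!\!\int |\nabla u^q|^{p-2}\langle\nabla u^q,\nabla u^{\lambda-1}\rangle=0 .
\]
After multiplying by $\lambda$, this becomes $\left[\int u^\lambda\right]+\lambda\int\!\!\int(\cdots)=0$.

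Next I would compute the gradient term. Using $\nabla u^{q}=qu^{q-1}\nabla u$ and $\nabla u^{\lambda-1}=(\lambda-1)u^{\lambda-2}\nabla u$, the integrand equals
\[
q^{p-1}(\lambda-1)\,u^{(q-1)(p-1)+\lambda-2}|\nabla u|^{p}.
\]
Since $\nabla u^{\sigma/p}=\frac{\sigma}{p}u^{\sigma/p-1}\nabla u$, we have $|\nabla u^{\sigma/p}|^{p}=(\sigma/p)^{p}u^{\sigma-p}|\nabla u|^{p}$. The exponents match because
\[
(q-1)(p-1)+\lambda-2=\lambda-1+q(p-1)-p=\lambda-D-p=\sigma-p .
\]
Therefore the gradient term equals $q^{p-1}p^{p}(\lambda-1)\sigma^{-p}|\nabla u^{\sigma/p}|^{p}$. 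This yields (\ref{vetacor}) with $c_1=q^{p-1}p^{p}$, which depends only on $p$ and $q$.

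It remains to justify these computations where $u=0$. Since $\sigma=\lambda-D\geq 1+q-1+q(p-1)=qp>0$, the function $u^{\sigma/p}$ is a composition of $u^{q}$ with a $C^1$ function $s\mapsto s^{\sigma/(pq)}$, whose exponent is at least $1$. So the chain rule in $W^{1,p}$ applies, and both sides vanish a.e. on $\{u=0\}$.
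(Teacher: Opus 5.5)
Your proposal is correct and is essentially the argument the paper intends. The paper gives no separate proof and defers to Lemma 2.6 of \cite{grigor2024finite}, which is the energy identity obtained by testing with $u^{\lambda-1}$ and made rigorous via Steklov averages; your constant $c_1=q^{p-1}p^{p}$ and the exponent check $(q-1)(p-1)+\lambda-2=\sigma-p$ are right. One caution on rigor: your direct admissibility check does not quite go through as written, and it is the Steklov-average route you settle on that carries the argument. For $q>1$, a bounded prefactor times an $L^2$ function is not automatically in $L^{1+1/q}$ when $M$ has infinite measure, so you also need a H\"older step using $u\in L^{1+q}$. For $q<1$, the $\varepsilon$-shift cannot help, since $1+1/q>2$ while you only know $\partial_t u^{\frac{1+q}{2}}\in L^2$.
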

	
	Let us observe for a later usage that \begin{equation}\label{valpha}u^{\sigma/p}\in L^{p}\left(I; W^{1, p}(M)\right).\end{equation}
	Indeed, using $\sigma/p\geq q$, we get that the function $\Phi(s)=s^{\frac{\sigma}{pq}}$ is Lipschitz on any bounded interval in $[0, \infty)$. Thus, $u^{\sigma/p}=\Phi(u^{q})\in W^{1, p}(M)$ and $\left|\nabla u^{\sigma/p}\right|=\left|\Phi^{\prime}(u^{q})\nabla u^{q}\right|\leq C\left|\nabla u^{q}\right|,$ whence for any bounded interval $J\subset I$, \begin{equation*}\label{valpha0}\int_{M\times J}u^{\sigma}+\left\vert \nabla \left( u^{\sigma/p } \right) \right\vert^{p}\leq  C^{\prime}\int_{M\times J}u^{\sigma}+\left\vert \nabla  u^{q } \right\vert^{p},\end{equation*} which is finite since $$\int_{M\times J}u^{\sigma}\leq \textnormal{const}~||u||_{L^{\infty}}^{\sigma-pq}\int_{M\times J}u^{pq}$$ and proves (\ref{valpha}).

	\begin{lemma}[Lemma 2.9 \cite{Grigoryan2024}]
		\label{monl1}
		Let $u=u\left( x,t\right) $ be a non-negative bounded solution to \emph{(\ref{dtv})} in $M\times I$. If $\lambda\geq 1$, including $\lambda=\infty$, then the function 
		\begin{equation*}
			t\mapsto \left\Vert u(\cdot ,t)\right\Vert _{L^{\lambda}(M)}
		\end{equation*}%
		is non-increasing in $I$.
	\end{lemma}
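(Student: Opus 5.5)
The plan is to treat finite $\lambda$ and $\lambda=\infty$ separately. The finite case comes from the Caccioppoli identity of Lemma \ref{Lem1}, and the case $\lambda=\infty$ follows by a limiting argument.

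\textbf{Case $\lambda\geq 1+q$.} Apply Lemma \ref{Lem1} with $t_1<t_2$ in $I$. Since $\lambda\geq 1+q>1$, we have $\lambda(\lambda-1)>0$. Also $\sigma=\lambda-D=\lambda-1+q(p-1)>0$, and $c_1>0$. So the second term in (\ref{vetacor}) is non-negative, and therefore
\[
\int_M u^\lambda(t_2)\,d\mu\leq\int_M u^\lambda(t_1)\,d\mu .
\]
The only point to check is that $\int_M u^\lambda$ is finite. This holds because $u\in C(I;L^{1+q})$ and $u$ is bounded, so $u^\lambda\leq \|u\|_\infty^{\lambda-1-q}u^{1+q}$.

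\textbf{Case $1\leq\lambda<1+q$.} Lemma \ref{Lem1} does not apply here, so we argue directly from the weak formulation (\ref{defvonweaksolq}). Take the test function $\psi=\phi_\varepsilon(u)$, where $\phi_\varepsilon$ is a smooth regularisation of $s\mapsto s^{\lambda-1}$ (for $\lambda=1$, approximate the constant $1$ by cut-off functions). The gradient term is
\[
|\nabla u^q|^{p-2}\langle\nabla u^q,\nabla\phi_\varepsilon(u)\rangle ,
\]
which is non-negative because $\phi_\varepsilon$ is non-decreasing and $\nabla u^q$ and $\nabla u$ are parallel with the same orientation. The time-derivative term is handled with a Steklov average in time, using $\partial_t u^{(1+q)/2}\in L^2$; it produces $[\int_M\Phi_\varepsilon(u)]_{t_1}^{t_2}$ with $\Phi_\varepsilon'=\phi_\varepsilon$. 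Letting $\varepsilon\to0$ by monotone convergence gives monotonicity of $\int_M u^\lambda$.

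This step is the main technical obstacle. The function $\phi_\varepsilon(u)$ must be an admissible test function in the class (\ref{defvontestsoluq}), and $u^{\lambda-1}$ is not regular near $u=0$ when $\lambda<1+q$. I would therefore use $\phi_\varepsilon(s)=(s+\varepsilon)^{\lambda-1}-\varepsilon^{\lambda-1}$, combined with a spatial cut-off $\eta_R$. The cut-off produces error terms involving $|\nabla u^q|^{p-1}|\nabla\eta_R|$, and these must vanish as $R\to\infty$. That uses $u^q\in L^p(W^{1,p})$ together with Hölder's inequality.

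\textbf{Case $\lambda=\infty$.} If $\int_M u^{\lambda}(t)\,d\mu=\infty$ for some $t$, the statement is trivial there, so assume the norms are finite. For $t_1<t_2$ and every $\lambda\geq 1+q$, the finite case gives $\|u(t_2)\|_{L^\lambda}\leq\|u(t_1)\|_{L^\lambda}$. Now let $\lambda\to\infty$. When the $L^{1+q}$ norm is finite, we have $\|f\|_{L^\lambda}\to\|f\|_{L^\infty}$ as $\lambda\to\infty$. Applying this to $u(t_1)$ and $u(t_2)$ yields $\|u(t_2)\|_{L^\infty}\leq\|u(t_1)\|_{L^\infty}$.
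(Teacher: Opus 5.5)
The paper gives no proof of this lemma; it is quoted from \cite{Grigoryan2024}, so your argument has to stand on its own. Your cases $\lambda\ge 1+q$ and $\lambda=\infty$ are fine. For $\lambda\ge 1+q$, the gradient term in Lemma \ref{Lem1} is non-negative. For $\lambda=\infty$, the limit $\lambda\to\infty$ is legitimate because $u(t)\in L^{1+q}\cap L^\infty$. The gap is in the range $1\le\lambda<1+q$. This is exactly the range the paper needs: Corollary \ref{corlambda}, used in Theorem \ref{sobolev} with $a<1+q$ (in particular $a=1$), rests on $\|u(t)\|_{L^a}\le\|u_0\|_{L^a}$.

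The first problem is at $\lambda=1$. Your family $(s+\varepsilon)^{0}-\varepsilon^{0}$ vanishes identically, and approximating $1$ by spatial cut-offs does not work. Take $\eta_R$ equal to $1$ on $B_R$, supported in $B_{2R}$, with $|\nabla\eta_R|\le 1/R$. Testing with $\eta_R$ gives
$\bigl[\int_M u\eta_R\,d\mu\bigr]_{t_1}^{t_2}=-\int_{t_1}^{t_2}\int_M|\nabla u^q|^{p-2}\langle\nabla u^q,\nabla\eta_R\rangle\,d\mu\,dt$.
This term has no sign. H\"older with $u^q\in L^p(W^{1,p})$ only bounds it by $o(1)\bigl((t_2-t_1)\mu(B_{2R})R^{-p}\bigr)^{1/p}$, which is useless unless $\mu(B_{2R})=O(R^{p})$. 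That fails already on $\mathbb{R}^n$ with $n>p$, the main case of the paper. Your H\"older argument for the cut-off error works only when the test function carries a factor dominated by $u^q\in L^p(M)$.

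The second problem is that for $q>1$ your test function $\phi_\varepsilon(u)$ is not known to be admissible. The only spatial regularity available is $u^q\in W^{1,p}(M)$. Near $u=0$ you have $\phi_\varepsilon(u)\approx c_\varepsilon u=c_\varepsilon (u^q)^{1/q}$, which is not a Lipschitz function of $u^q$. For the same reason $\nabla u$ need not exist, so the remark about parallel gradients is unjustified. Also $\Phi_\varepsilon(u)\approx c_\varepsilon u^2/2$ need not be integrable.

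Both problems disappear if you regularize in the variable $w=u^q$. Test with $g_\varepsilon(u^q)$, where
$g_\varepsilon(w)=(w+\varepsilon)^{(\lambda-1)/q}-\varepsilon^{(\lambda-1)/q}$ for $1<\lambda<1+q$ (note $0<(\lambda-1)/q<1$), and $g_\varepsilon(w)=w/(w+\varepsilon)$ for $\lambda=1$.
These functions are non-decreasing and Lipschitz, vanish at $0$, and increase to $u^{\lambda-1}$ on $\{u>0\}$ as $\varepsilon\downarrow 0$. This has three consequences:
\begin{itemize}
\item Since $0\le g_\varepsilon(u^q)\le L_\varepsilon u^q$, the function lies in $W^{1,p}(M)\cap L^{1+1/q}(M)$ and can be used on all of $M$ with no spatial cut-off.
\item The flux term is $g_\varepsilon'(u^q)|\nabla u^q|^p\ge 0$.
\item $\Phi_\varepsilon(u)=\int_0^u g_\varepsilon(s^q)\,ds\le C_\varepsilon u^{1+q}$ is integrable and convex in $u$.
\end{itemize}
Now test the weak formulation on $[t-h,t]$ with the time-independent function $g_\varepsilon(u^q(t))$. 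Use convexity in the form $\Phi_\varepsilon(u(t))-\Phi_\varepsilon(u(t-h))\le g_\varepsilon(u^q(t))\bigl(u(t)-u(t-h)\bigr)$; the backward difference is what gives the correct sign. Integrating in $t$ and letting $h\to0$ gives $\int_M\Phi_\varepsilon(u(t_2))\le\int_M\Phi_\varepsilon(u(t_1))\le\lambda^{-1}\int_M u(t_1)^\lambda$. Since $\Phi_\varepsilon(u)\uparrow u^\lambda/\lambda$, monotone convergence finishes the proof.
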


\begin{lemma}
	\label{nabdecreasing}
	Let $u$ be a non-negative bounded solution of (\ref{dtv}) in $M\times (0, T)$. Then, for almost any $t\in (0, T)$,
	\begin{equation*}
	\int_{M}\left\vert \nabla  u^{q } \right\vert^{p}d\mu (t)\leq \int_{M}\left\vert \nabla  u_{0}^{q } \right\vert^{p}d\mu.
	\end{equation*}%
\end{lemma}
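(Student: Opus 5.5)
The plan is to show that the energy
\[
E(t)=\int_{M}\left\vert \nabla u^{q}\right\vert^{p}d\mu(t)
\]
is non-increasing in $t$. We do this by testing the equation with the time derivative of $u^{q}$. Formally, multiplying (\ref{dtv}) by $\partial_t(u^{q})=qu^{q-1}\partial_t u$ and integrating over $M$ gives
\[
\int_M q u^{q-1}(\partial_t u)^2\,d\mu = -\int_M |\nabla u^q|^{p-2}\langle \nabla u^q,\nabla \partial_t u^q\rangle d\mu = -\frac1p\frac{d}{dt}\int_M|\nabla u^q|^p d\mu .
\]
The left-hand side is non-negative, so $E$ decreases and $E(t)\le E(0)=\int_M|\nabla u_0^q|^p d\mu$. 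Note that the left-hand side equals $\frac{4q}{(1+q)^2}\int_M(\partial_t u^{\frac{1+q}{2}})^2\,d\mu$. This is exactly why the definition requires $\partial_t u^{\frac{1+q}{2}}\in L^2_{loc}(I,L^2(M))$: it makes the dissipation term finite.

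To make the argument rigorous, I would proceed in three steps.

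\textbf{Step 1 (discretized test function).} Since $\partial_t u^q$ is not known to be an admissible test function, I would use Steklov averages or a time-difference quotient. Take $\psi = \eta(t)\,\bigl(u^q(t+h)-u^q(t)\bigr)/h$, where $\eta$ is a cutoff in time. This $\psi$ lies in $L^p_{loc}(I;W^{1,p})$ by (\ref{defvonsoluq}). Its time regularity in $L^{1+1/q}$ is inherited from $u\in C(I;L^{1+q})$ together with the assumption on $\partial_t u^{(1+q)/2}$ and the boundedness of $u$. If needed, one can instead work with the Steklov-averaged equation, which holds for a.e. $t$ with $\partial_t u_h\in L^{1+q}$.

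\textbf{Step 2 (monotonicity estimates).} Two monotonicity facts control the terms. Since $s\mapsto s^q$ is increasing,
\[
\bigl(u(t+h)-u(t)\bigr)\bigl(u^q(t+h)-u^q(t)\bigr)\ge 0 .
\]
By convexity of $\xi\mapsto\frac1p|\xi|^p$,
\[
|\nabla u^q(t)|^{p-2}\langle\nabla u^q(t),\nabla u^q(t+h)-\nabla u^q(t)\rangle\le \tfrac1p|\nabla u^q(t+h)|^p-\tfrac1p|\nabla u^q(t)|^p .
\]
Pairing the difference-quotient equation $\int_M \frac{u(t+h)-u(t)}{h}\phi=-\int_t^{t+h}\!\!\frac1h\int_M|\nabla u^q|^{p-2}\nabla u^q\cdot\nabla\phi$ with $\phi=u^q(t+h)-u^q(t)$ then yields, after integrating against $\eta$ and letting $h\to0$,
\[
\int_{t_1}^{t_2}\eta'(t)E(t)\,dt\ge 0
\]
for non-negative $\eta$. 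Equivalently, $E$ is a.e. equal to a non-increasing function. A cleaner alternative is to apply the convexity inequality with the roles of the time points arranged so that the averaged flux is evaluated at the correct time. This backward-difference trick is the standard one used in Alt--Luckhaus-type energy estimates.

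\textbf{Step 3 (comparison at $t=0$).} We must compare with $t=0$. Take $\eta$ equal to $1$ on $[0,t]$ and pass to the limit, using lower semicontinuity of $E$ under weak convergence of $\nabla u^q(s)$ as $s\to0$. For the reverse direction it suffices to use the convexity inequality anchored at $u_0$: test with $u^q(s)-u_0^q$ on $[0,t]$. This gives
\[
\frac1p\int_M|\nabla u^q(t)|^p-\frac1p\int_M|\nabla u_0^q|^p\le -\int_0^t\!\!\int_M \partial_s u\,\partial_s(u^q)\le 0 .
\]

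The main obstacle is the justification in Steps 1--2: the flux term has only $L^p$-in-time regularity, and the time derivative is controlled only through $u^{(1+q)/2}$. Matching these requires carefully choosing the time-shift direction so that the convexity inequality has the right sign without needing $\partial_t\nabla u^q$. I would handle this by the implicit (backward) discretization just described, which avoids differentiating the gradient at all.
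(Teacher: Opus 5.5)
Your final route is correct: insert the backward quotient $\psi=\eta(s)\bigl(u^q(s)-u^q(s-h)\bigr)/h$ directly into (\ref{defvonweaksolq}), so that $\psi$ and the flux are evaluated at the same time $s$, and use convexity of $\xi\mapsto\frac1p|\xi|^p$. It differs from the paper at the decisive step. The paper tests with the forward quotient $\theta_\nu(s)\bigl(u^q(s+h)-u^q(s)\bigr)/h$ and asserts the exact identity
\[
\frac1h\int_Q\theta_\nu(s)|\nabla u^q|^{p-2}\bigl\langle\nabla u^q,\nabla u^q(s+h)-\nabla u^q(s)\bigr\rangle\,d\mu\,ds=-\frac1h\int_Q\bigl(\theta_\nu(s)-\theta_\nu(s-h)\bigr)|\nabla u^q|^p\,d\mu\,ds .
\]
This identity does not hold. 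Even for smooth $u$ the two sides differ by a factor $p$ in the limit $h\to0$. For a weak solution only the convexity inequality is available, and with a forward shift it gives $\frac1p\int\theta_\nu'E\,ds\le\int_Q\theta_\nu\,\partial_su\,\partial_su^q$, which says nothing about the sign of $\int\theta_\nu'E$. Your backward shift reverses this to $\frac1p\int\eta'E\,ds\ge\int_Q\eta\,\partial_su\,\partial_su^q\ge0$, which is exactly what is needed, and it uses no time regularity of $\nabla u^q$. Your anchoring at $u_0$ also covers a point the paper skips when it lets $\nu\to0$: nothing there shows that $\frac1\nu\int_0^\nu E\,ds$ is bounded by $\int_M|\nabla u_0^q|^p$.

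Two parts of your write-up need fixing.

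In Step 2, discard the forward-difference and Steklov version. With the averaged flux $\frac1h\int_s^{s+h}|\nabla u^q|^{p-2}\nabla u^q\,d\tau$ the convexity inequality cannot be applied at all. With a pointwise forward shift it points the wrong way, as explained above.

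In Step 3, lower semicontinuity only gives $E(0)\le\liminf_{s\to0}E(s)$, which is the useless direction. Also, testing with $u^q(s)-u_0^q$ on $[0,t]$ does not produce the pointwise energy inequality you display. Set $G(\tau)=\frac{\tau^{1+q}}{1+q}-\tau u_0^q$, which is convex in $\tau$ and minimal at $\tau=u_0$. Then that test gives
\[
\frac1p\int_0^t\bigl(E(s)-E(0)\bigr)ds\le-\int_M\bigl(G(u(t))-G(u_0)\bigr)d\mu\le0 .
\]
Combine this averaged bound with the a.e. monotonicity of $E$ from the backward-difference step. This yields $E(t)\le\frac1t\int_0^tE\,ds\le E(0)$ for a.e. $t$, which suffices. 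Equivalently, you can extend $u(s)=u_0$ for $s<0$ and take $\eta\equiv1$ near $s=0$ in the backward quotient; this gives $\frac1\nu\int_{t-\nu}^tE\,ds\le E(0)$ directly.
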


\begin{proof}
Since $u$ is a weak solution
of (\ref{dtv}) we obtain by testing in (\ref{defvonweaksolq}) with test function $$\psi(x, s)=\theta_{\nu}(s)\frac{u^{q}(x, s+h)-u^{q}(s)}{h},$$ where $h, \nu>0$ and \begin{equation*}
	\theta_{\nu}(s)=\left\{ 
	\begin{array}{ll}
		0, & s\leq 0,\\
		\frac{1}{\nu}s, & 0<s<\nu, \\ 
		1, & \nu\leq s<t-\nu, \\ 
		\frac{1}{\nu}(t-s), & t-\nu\leq s<t, \\ 
		0, & s\geq t,%
	\end{array}%
	\right.
\end{equation*} which is admissible since $u^{q}(\cdot, s)\in W^{1, p}(M)$ for almost any $s\in (0, T)$, $$\int_{Q}-u\partial_{s}\psi+|\nabla u^{q}|^{p-2}\langle\nabla u^{q},
\nabla \psi\rangle d\mu ds=0,$$ where $Q=M\times [0, t]$. Here we have used that $\psi(\cdot, 0)=\psi(\cdot, t)=0$. Further we can compute \begin{align*}
\int_{Q}|\nabla u^{q}|^{p-2}\langle\nabla u^{q},
\nabla \psi\rangle d\mu ds&=\frac{1}{h}\int_{Q}\theta_{\nu}(s)|\nabla u^{q}|^{p-2}\langle\nabla u^{q},
\nabla u^{q}(x, s+h)-\nabla u^{q}(x, s) \rangle d\mu ds\\&=-\frac{1}{h}\int_{Q}(\theta_{\nu}(s)-\theta_{\nu}(s-h))|\nabla u^{q}|^{p}
d\mu ds,
\end{align*} and $$\frac{1}{h}\int_{Q}(\theta_{\nu}(s)-\theta_{\nu}(s-h))|\nabla u^{q}|^{p}
d\mu ds\to \int_{Q}\theta_{\nu}^{\prime}(s)|\nabla u^{q}|^{p}
d\mu ds\quad \textnormal{as}~h\to 0.$$ On the other hand, $$\int_{Q}-u\partial_{s}\psi d\mu ds=\frac{1}{h}\int_{Q}\theta_{\nu}\partial_{s}u(u^{q}(x, s+h)-u^{q}(s))d\mu ds$$ and $$\frac{1}{h}\int_{Q}\theta_{\nu}\partial_{s}u(u^{q}(x, s+h)-u^{q}(s))d\mu ds\to \int_{Q}\theta_{\nu}\partial_{s}u\partial_{s}u^{q}d\mu ds\quad \textnormal{as}~h\to 0.$$ Since $$\int_{Q}\theta_{\nu}\partial_{s}u\partial_{s}u^{q}d\mu ds=\frac{4q}{(1+q)^{2}}\int_{Q}\theta_{\nu}(\partial_{s}u^{\frac{1+q}{2}})^{2}d\mu ds\geq0, $$ we get $$\int_{Q}\theta_{\nu}^{\prime}(s)|\nabla u^{q}|^{p}
d\mu ds\geq 0.$$
Hence, sending $\nu\to 0$, we conclude for almost any $t\in (0, T)$, $$\left[\int_{M}|\nabla u^{q}|^{p}d\mu \right]_{0}^{t}\leq0,$$ which proves the claim.
\end{proof}

Define, for any non-negative function $v$ and any $r>0$, the functional \begin{equation}\label{funcsobo}J(r, v)=\int_{M}\frac{v^{r}}{||v||_{L^{r}}^{r}}\log\left(\frac{v}{||v||_{L^{r}}}\right)d\mu.\end{equation}

\begin{lemma}\label{difffunc}
	Let $u$ be a non-negative bounded solution of (\ref{dtv}) in $M\times (0, t)$. Let $\lambda(s)$ be a continuously differentiable non-decreasing function on $[0, t)$ such that $\lambda(s)\geq 1+q$ and set $\sigma(s)=\lambda(s)-D$. Then, for almost every $s\in(0, t)$, \begin{equation}\label{logdiff}
		\frac{d}{ds}\log ||u(s)||_{L^{\lambda(s)}}\leq\frac{\lambda^{\prime}(s)}{\lambda(s)}J(\lambda(s), u(s))-\frac{c_{1}(\lambda(s)-1)}{\sigma(s)^{p}}\frac{||\nabla \left( u^{\sigma(s)/p } \right)||_{L^{p}}^{p}}{||u(s)||_{L^{\lambda(s)}}^{\lambda(s)}}.
	\end{equation}
\end{lemma}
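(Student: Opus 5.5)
Write $F(s)=\int_M u(s)^{\lambda(s)}\,d\mu$ and $N(s)=\|u(s)\|_{L^{\lambda(s)}}$, so that $\log N(s)=\frac{1}{\lambda(s)}\log F(s)$. Differentiating formally gives
\begin{equation*}
\frac{d}{ds}\log N(s)=-\frac{\lambda'(s)}{\lambda(s)^{2}}\log F(s)+\frac{F'(s)}{\lambda(s)F(s)} .
\end{equation*}
The plan has three steps: compute $F'(s)$ by splitting the time dependence into the part from $\lambda(s)$ and the part from $u(s)$; control the second part with Lemma \ref{Lem1}; and check that the first part, combined with $-\frac{\lambda'}{\lambda^{2}}\log F$, is exactly $\frac{\lambda'}{\lambda}J(\lambda,u)$.

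\textbf{Derivative of $F$.} Formally,
\begin{equation*}
F'(s)=\lambda'(s)\int_M u^{\lambda(s)}\log u\,d\mu+\int_M \partial_s\bigl(u^{\lambda}\bigr)\Big|_{\lambda=\lambda(s)}d\mu .
\end{equation*}
For the first term, I compute $\frac{\lambda'}{\lambda F}\int_M u^{\lambda}\log u-\frac{\lambda'}{\lambda^{2}}\log F$. Since $\log F=\lambda\log N$, this equals $\frac{\lambda'}{\lambda}\int_M \frac{u^{\lambda}}{N^{\lambda}}\bigl(\log u-\log N\bigr)d\mu=\frac{\lambda'}{\lambda}J(\lambda,u)$, which is the first term of (\ref{logdiff}).

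For the second term, with $\lambda$ frozen at $\lambda(s)$, Lemma \ref{Lem1} on $[s,s+h]$ gives
\begin{equation*}
\int_M u^{\lambda}(s+h)-\int_M u^{\lambda}(s)=-c_1\lambda(\lambda-1)\sigma^{-p}\int_s^{s+h}\!\int_M|\nabla u^{\sigma/p}|^{p}\,d\mu\,d\tau .
\end{equation*}
Dividing by $h$ and using Lebesgue differentiation at a.e. $s$ gives the derivative $-c_1\lambda(\lambda-1)\sigma^{-p}\|\nabla u^{\sigma/p}(s)\|_p^p$. Dividing by $\lambda F$ produces exactly the second term of (\ref{logdiff}), with equality. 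The inequality in (\ref{logdiff}) then only absorbs the error from the rigorous justification below.

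\textbf{Rigorous justification.} I would not differentiate $F$ pointwise. Instead I would write the increment as
\begin{equation*}
\int_M u(s+h)^{\lambda(s+h)}-u(s)^{\lambda(s)}=\Bigl[\int_M u(s+h)^{\lambda(s+h)}-u(s+h)^{\lambda(s)}\Bigr]+\Bigl[\int_M u(s+h)^{\lambda(s)}-u(s)^{\lambda(s)}\Bigr].
\end{equation*}
The second bracket is handled exactly by Lemma \ref{Lem1} with fixed exponent $\lambda(s)\ge 1+q$. For the first bracket, I would apply the mean value theorem in the exponent: it equals $(\lambda(s+h)-\lambda(s))\int_M u(s+h)^{\xi}\log u(s+h)$ for some intermediate $\xi$. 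This term needs a bounded-function domination to pass $h\to0$:
\begin{itemize}
\item Near $u=0$, the integrand is bounded by $u^{\lambda_0}|\log u|\le C u^{\lambda_0-\epsilon}$ with $\lambda_0-\epsilon\ge 1$.
\item $u\in L^\infty\cap L^{1}$ by Lemma \ref{monl1}; failing that, $u\in L^{1+q}$ from the solution class suffices, since $\lambda\ge1+q$.
\item Continuity of $u$ in $C(I;L^{1+q})$ together with boundedness gives continuity in $L^{r}$ for every $r\ge 1+q$.
\end{itemize}
With this domination, the limit is $\lambda'(s)\int_M u(s)^{\lambda(s)}\log u(s)$.

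Consequently $\log F$, and hence $\log N$, is absolutely continuous or at least differentiable a.e., with the stated formula. This uses $F(s)>0$; the case $u(s)\equiv0$ is trivial, since then $u$ vanishes for all later times by Lemma \ref{monl1}. The main obstacle is this justification: the $\log u$ factor near $u=0$, and making sure the a.e. differentiability of the time-integral of $\|\nabla u^{\sigma(\tau)/p}\|_p^p$ is compatible with the varying exponent $\sigma(\tau)$. For the latter I would use continuity of $\sigma$ and the bound (\ref{valpha}) to replace $\sigma(\tau)$ by $\sigma(s)$ at Lebesgue points, up to an $o(1)$ error, which is where an inequality rather than an equality is harmless.
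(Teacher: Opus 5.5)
Your proposal is correct and is essentially the paper's argument. The paper likewise freezes the exponent and uses Lemma \ref{Lem1} on $[s,s+h]$ to get $\partial_s\int_M u^{\lambda(s)}d\mu=-c_1\lambda(\lambda-1)\sigma^{-p}\|\nabla u^{\sigma/p}\|_{L^p}^p$, adds $\lambda'(s)\int_M u^{\lambda}\log u\,d\mu$ by the chain rule, and recombines this with $-\frac{\lambda'}{\lambda^{2}}\log\int_M u^{\lambda}d\mu$ into $\frac{\lambda'}{\lambda}J(\lambda,u)$, so (\ref{logdiff}) in fact holds with equality; your increment decomposition, domination of the $u^{\xi}\log u$ term, and Lebesgue-point treatment of the varying exponent supply rigor that the paper's formal computation omits (one small caveat: your $L^{1+q}$ fallback does not dominate $u^{1+q}|\log u|$ when the exponent equals $1+q$, but this can only matter at the single point where $\lambda$ first exceeds $1+q$, so the a.e.\ statement is unaffected).
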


\begin{proof}
	Let us set $$\Phi(s, \lambda(s))=\int_{M }u^{\lambda(s) } d\mu(s).$$ It follows from (\ref{vetacor}) that, for almost any $s\in(0, t)$,
	$$\Phi(s+h, \lambda(s))-\Phi(s, \lambda(s))=-c_{1}\lambda(s)\left( \lambda (s) -1\right)\sigma(s) ^{-p}
	\int_{s}^{s+h}\int_{M}\left\vert \nabla \left( u^{\sigma(s)/p } \right) \right\vert
	^{p}d\mu dt,$$ whence dividing by $h$ and sending $h\to 0$, we get, 
	$$\partial_{s}\Phi(s, \lambda(s))=-c_{1}\lambda(s)\left( \lambda(s)  -1\right)\sigma(s) ^{-p}
	\int_{M}\left\vert \nabla \left( u^{\sigma(s)/p } \right) \right\vert^{p}d\mu.$$
	Thus, we obtain \begin{align*}
		\frac{d}{ds} \Phi(s, \lambda(s))&=\partial_{s}\Phi(s, \lambda(s))+\lambda^{\prime}(s)\partial_{\lambda}\Phi(s, \lambda(s))\\&=-c_{1}\lambda(s)\left( \lambda(s)  -1\right)\sigma(s) ^{-p}
		\int_{M}\left\vert \nabla \left( u^{\sigma(s)/p } \right) \right\vert^{p}d\mu +\lambda^{\prime}(s)\int_{M}\log u u^{\lambda}.
	\end{align*}
	Therefore, \begin{align*}\frac{d}{ds} \log||u(s)||_{L^{\lambda(s)}}&=\frac{d}{ds}\left(\frac{1}{\lambda(s)}\log\Phi(s, \lambda(s))\right)\\&=-\frac{\lambda^{\prime}(s)}{\lambda(s)}\log||u(s)||_{L^{\lambda(s)}}+\frac{1}{\lambda(s)\Phi(s, \lambda(s))}\frac{d}{ds} \Phi(s, \lambda(s))\\&=-\frac{\lambda^{\prime}(s)}{\lambda(s)}\log||u(s)||_{L^{\lambda(s)}}-\frac{c_{1}\left( \lambda(s)  -1\right)}{\sigma(s)^{p}\Phi(s, \lambda(s))}\int_{M}\left\vert \nabla \left( u^{\sigma(s)/p } \right) \right\vert^{p}d\mu\\&\quad +\frac{\lambda^{\prime}(s)}{\lambda(s)\Phi(s, \lambda(s))}\int_{M}\log u u^{\lambda}.
	\end{align*}
	Hence, we obtain $$\frac{d}{ds} \log||u(s)||_{L^{\lambda(s)}}\leq \frac{\lambda^{\prime}(s)}{\lambda(s)}\int_{M}\frac{u^{\lambda(s)}}{||u||_{L^{\lambda(s)}}}\log\left(\frac{u}{||u||_{L^{\lambda(s)}}}\right)d\mu-\frac{c_{1}\left( \lambda(s)  -1\right)}{\sigma(s)^{p}\Phi(s, \lambda(s))}\int_{M}\left\vert \nabla \left( u^{\sigma(s)/p } \right) \right\vert^{p}d\mu ,$$ which implies (\ref{logdiff}) and finishes the proof.
\end{proof}
	
	\section{Upper bounds}
	
	\label{Secini} 
	\begin{definition}
		We say that $M$ satisfies a Sobolev inequality if there exists a constant $S_{M}>0$ such that \begin{equation}\label{weightsobo}\left(\int_{M}{|v|^{p\kappa} d\mu}\right)^{1/\kappa}\leq S_{M}\int_{M}{|\nabla v|^{p}d\mu} \quad \textnormal{for all}~v\in W^{1, p}(M),\end{equation} where $\kappa>1$.
	\end{definition} 
Let us also set \begin{equation}\label{nu}\nu=\frac{\kappa-1}{\kappa}.\end{equation} 
When $M$ is a Cartan-Hadamard manifold with $\textnormal{dim}~M=n>p$, (\ref{weightsobo}) holds with $\kappa=\frac{n}{n-p}$ and $S_{M}\leq \textnormal{const}$ and we have $\nu=\frac{p}{n}$.

\begin{lemma}\label{logsobolev}
Suppose that $M$ satisfies the Sobolev inequality (\ref{weightsobo}). Let $J$ be defined by (\ref{funcsobo}). Then, for any $\varepsilon>0$, any $r\in (0, p\kappa)$ and any non-negative $v\in  W^{1, p}(M)$, we have \begin{equation}\label{logsobolevin}rJ(r,v)\leq \frac{r\kappa S_{M}\varepsilon}{p\kappa-r}\frac{||\nabla v||_{L^{p}}^{p}}{||v||_{L^{r}}^{p}}-\frac{r\kappa}{p\kappa-r}\log \varepsilon.\end{equation}
\end{lemma}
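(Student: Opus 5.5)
The plan is the standard Jensen-plus-Sobolev argument. Normalize $v$ and regard $d\omega=\frac{v^{r}}{\|v\|_{L^r}^r}\,d\mu$ as a probability measure on $\{v>0\}$. By homogeneity (both sides of (\ref{logsobolevin}) are invariant under $v\mapsto cv$), we may assume $\|v\|_{L^r}=1$, so that $J(r,v)=\int_M v^r\log v\,d\mu$.

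Write
\begin{equation*}
\log v=\frac{1}{p\kappa-r}\log\left(v^{p\kappa-r}\right).
\end{equation*}
Jensen's inequality for the concave function $\log$ with respect to $\omega$ then gives
\begin{equation*}
rJ(r,v)=\frac{r}{p\kappa-r}\int_M \log\left(v^{p\kappa-r}\right)d\omega\leq \frac{r}{p\kappa-r}\log\int_M v^{p\kappa}\,d\mu=\frac{r\kappa}{p\kappa-r}\log\left(\|v\|_{L^{p\kappa}}^{p}\right).
\end{equation*}
This uses $r<p\kappa$ so that the exponent is positive. If $\|v\|_{L^{p\kappa}}$ were infinite the bound would be trivial, but Sobolev's inequality guarantees finiteness for $v\in W^{1,p}(M)$.

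Next apply the Sobolev inequality (\ref{weightsobo}) to get $\|v\|_{L^{p\kappa}}^{p}\leq S_M\|\nabla v\|_{L^p}^p$. Then use the elementary inequality $\log y\leq \varepsilon y-\log\varepsilon-1\leq \varepsilon y-\log\varepsilon$ for $y,\varepsilon>0$; it follows from $\log(\varepsilon y)\leq \varepsilon y-1$. Taking $y=S_M\|\nabla v\|_{L^p}^p$ yields
\begin{equation*}
rJ(r,v)\leq \frac{r\kappa S_M\varepsilon}{p\kappa-r}\|\nabla v\|_{L^p}^p-\frac{r\kappa}{p\kappa-r}\log\varepsilon.
\end{equation*}
Undoing the normalization, $\|\nabla v\|_{L^p}^p$ becomes $\|\nabla v\|_{L^p}^p/\|v\|_{L^r}^p$, which is (\ref{logsobolevin}).

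The main point requiring care is justifying Jensen's step. The integrand $v^{r}\log v$ may be non-integrable near $v=0$ or large values, so $J$ should be interpreted with the positive part finite (which the bound itself shows) and the negative part possibly $-\infty$. Also, $\|v\|_{L^r}$ must be finite and nonzero: for $r<p$ this is not automatic from $W^{1,p}$, so the statement is understood for $v$ with $v\in L^r$, or else the right side is interpreted appropriately. I would handle this by restricting to $\{v>0\}$ and applying Jensen to the probability measure $\omega$, noting that Jensen holds for functions whose positive part is integrable.
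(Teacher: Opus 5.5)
Your proposal is correct and follows essentially the same route as the paper: Jensen's inequality for $\log$ with respect to the probability measure $v^{r}\|v\|_{L^r}^{-r}\,d\mu$ applied to $v^{p\kappa-r}$, then the Sobolev inequality (\ref{weightsobo}), then $\log x\le \varepsilon x-\log\varepsilon$. Your normalization $\|v\|_{L^r}=1$ and your integrability remarks are extra care that the paper leaves implicit. These remarks are that $J$ may equal $-\infty$, which is harmless since Jensen holds in the extended sense, and that $v\in L^r$ must be assumed when $r<p$, which holds in the application $v=u^{\sigma/p}$.
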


\begin{proof}
Setting $\alpha=p\kappa-r>0$ we obtain by Jensen's inequality with respect to the probability measure $\frac{v^{r}}{||v||_{L^{r}}^{r}}d\mu$ and the Sobolev inequality (\ref{weightsobo}) \begin{align*}
rJ(r,v)&=\frac{r}{\alpha}\int_{M}\frac{v^{r}}{||v||_{L^{r}}^{r}}\log\left(\frac{v^{\alpha}}{||v||_{L^{r}}^{\alpha}}\right)d\mu\\&\leq \frac{r}{\alpha}\log\left(\frac{||v||_{L^{\alpha+r}}^{\alpha+r}}{||v||_{L^{r}}^{\alpha+r}}\right)=\frac{rp\kappa}{p\kappa-r}\log\left(\frac{||v||_{L^{p\kappa}}}{||v||_{L^{r}}}\right)\\&\leq \frac{rp\kappa}{p\kappa-r}\log\left(\frac{S_{M}^{1/p}||\nabla v||_{L^{p}}}{||v||_{L^{r}}}\right).
\end{align*}
Since $\log x\leq \varepsilon x-\log \varepsilon$, for all positive $x,\varepsilon$, it follows that $$rJ(r,v)\leq \frac{r\kappa S_{M}\varepsilon}{p\kappa-r}\frac{||\nabla v||_{L^{p}}^{p}}{||v||_{L^{r}}^{p}}-\frac{r\kappa}{p\kappa-r}\log \varepsilon,$$ which is (\ref{logsobolevin}) and finishes the proof.
\end{proof}

	The following theorem is the first part our main result Theorem \ref{mainthmint}.  
	
	\begin{theorem}\label{thmfinexinf}
	Let $u$ be a non-negative bounded solution of (\ref{dtv}) in $M\times \mathbb{R}_{+}$ with $u(\cdot, 0)=u_{0}\in L^{1}(M)\cap L^{\infty}(M)$. Suppose that $M$ satisfies the Sobolev inequality (\ref{weightsobo}). Then, for any $a\geq 1$ satisfying \begin{equation}\label{ainf}a>\frac{D}{\nu} ,\end{equation} we have, for all $t>0$, \begin{equation}\label{decayofinfthm}||u(t)||_{L^{\infty}(M)}\leq C ||u_{0}||_{L^{a}(M)}^{\frac{a\nu}{a\nu-D}}t^{-\frac{1}{a\nu-D}},\end{equation} where $C$ depends on $p, q, \kappa, a$ and $S_{M}$.
\end{theorem}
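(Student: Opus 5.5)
We follow the Bonforte--Grillo strategy: a time-dependent exponent $\lambda(s)$ running from $a$ to $\infty$ along $[0,t)$, combined with Lemma \ref{difffunc} and the log-Sobolev inequality of Lemma \ref{logsobolev}. By the monotonicity in Lemma \ref{monl1} we may assume $a\geq 1+q$ when applying Lemma \ref{difffunc}. The case $1\le a<1+q$ is then handled at the end by interpolation, or by first running the flow with a starting exponent $\lambda(0)=\max(a,1+q)$ and using $L^a$--$L^\infty$ interpolation together with an absorption argument; we choose the latter.

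\textbf{Log-Sobolev step.} Fix $t>0$ and choose a $C^1$ non-decreasing $\lambda:[0,t)\to[a,\infty)$ with $\lambda(0)=a$ and $\lambda(s)\to\infty$ as $s\to t$, for example $\lambda(s)=\frac{at}{t-s}$. At each $s$, apply Lemma \ref{logsobolev} to $v=u^{\sigma/p}$ with $r=p\lambda/\sigma$; this needs $r<p\kappa$, i.e.\ $\lambda/\sigma<\kappa$. Since $\lambda/\sigma=\lambda/(\lambda-D)$, this holds once $\lambda$ is large relative to $D/\nu$, which is where the hypothesis $a>D/\nu$ enters (for $D\le 0$ it is automatic). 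Then $J(r,v)=\frac{p}{\sigma}J(\lambda,u)$, $\|v\|_{L^r}^p=\|u\|_{L^\lambda}^{\sigma}$, and the gradient term $\|\nabla u^{\sigma/p}\|_p^p/\|u\|_\lambda^{\sigma}$ matches the negative term in (\ref{logdiff}), whose denominator is $\|u\|_\lambda^\lambda=\|u\|_\lambda^{\sigma}\|u\|_\lambda^{D}$. We choose $\varepsilon=\varepsilon(s)$ so that the positive gradient term from (\ref{logsobolevin}), multiplied by $\lambda'/\lambda$, exactly cancels the dissipative term. This gives
\[
\varepsilon(s)\sim \frac{(\lambda-1)\,(p\kappa-r)\,\lambda}{\sigma^{p-1}\,\lambda'\,\kappa\,S_M\,p}\,\|u(s)\|_{L^{\lambda}}^{-D}.
\]
The resulting differential inequality for $y(s)=\log\|u(s)\|_{L^{\lambda(s)}}$ has the form
\[
y'\le \frac{\lambda'}{\lambda}\,\frac{\kappa\sigma\,(-\log\varepsilon)}{p\kappa\sigma- p\lambda}\cdot\frac{1}{1}
= -\frac{\lambda'}{\lambda}\,\frac{\kappa}{\kappa\sigma-\lambda}\Big(\log A(s)-D\,y\Big),
\]
where $A(s)$ is explicit in $\lambda,\lambda'$. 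This inequality is linear in $y$.

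\textbf{Integration.} We solve the linear ODE inequality with an integrating factor. The coefficient of $y$ is $\frac{\lambda'}{\lambda}\frac{\kappa D}{\kappa(\lambda-D)-\lambda}=\frac{\lambda'}{\lambda}\frac{D}{\nu\lambda-D}$. Its primitive is $\log\frac{\nu\lambda-D}{\lambda}$ up to constants, and this is finite and positive precisely because $\nu a>D$. Integrating from $0$ to $t$ and letting $\lambda\to\infty$ gives
\[
\log\|u(t)\|_\infty\le \frac{a\nu}{a\nu-D}\log\|u_0\|_{L^a}+\int_0^t(\dots)\,ds.
\]
The source integral involves $\log(\lambda'/\lambda)\sim\log\frac{1}{t-s}$ weighted by $\frac{\lambda'}{\lambda^2}$-type factors, so it converges. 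The $t$-dependence can be read off by scaling: with $\lambda(s)=\frac{at}{t-s}$, the term $\log\lambda'$ contributes $-\log t$ times $\int\frac{\lambda'}{\lambda}\frac{\kappa}{\kappa\sigma-\lambda}\cdot(\text{integrating factor})$, and this coefficient must equal $\frac{1}{a\nu-D}$. As a consistency check, scaling invariance of (\ref{dtv}) forces that exponent once the $\|u_0\|_{L^a}$ power is $\frac{a\nu}{a\nu-D}$.

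\textbf{Main obstacle.} The hard part is the bookkeeping that makes the weights converge near $s=t$: checking that $\int_0^t \frac{\lambda'}{\lambda}\frac{\kappa}{\kappa\sigma-\lambda}\,|\log(\lambda'/\lambda\cdots)|\,ds<\infty$, and identifying the exact exponents. We would handle this by changing variables from $s$ to $\lambda$, so that $d\lambda/\lambda^2$ weights appear and make the integrals of $\log\lambda$ finite. A further subtlety is that Lemma \ref{difffunc} gives the inequality only almost everywhere, with $\lambda\to\infty$ at the endpoint. We would therefore integrate on $[0,t-\delta]$, obtain a bound on $\|u(t-\delta)\|_{L^{\lambda(t-\delta)}}$, and pass $\delta\to0$ using Lemma \ref{monl1} and $\|u(t)\|_\infty=\lim_{\lambda\to\infty}\|u(t)\|_{L^\lambda}$, which is valid since $u$ is bounded and $L^a$.
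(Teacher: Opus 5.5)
Your core argument is the paper's: Lemma \ref{difffunc} along $\lambda(s)=\frac{at}{t-s}$, Lemma \ref{logsobolev} for $v=u^{\sigma/p}$ with $r=p\lambda/\sigma$ (so $r<p\kappa\iff\lambda>D/\nu$), an $\varepsilon$ that removes the gradient term, and then the linear inequality $y'\le\frac{\lambda'}{\lambda}\frac{D}{\nu\lambda-D}\,y-\frac{\lambda'}{\lambda}\frac{1}{\nu\lambda-D}\log A$, integrated in the variable $\xi=\lambda$. The paper instead bounds the gradient from below and cancels the entropy terms, but this gives the same $\varepsilon$. There are two algebra slips to fix. First, the identity is $J(r,v)=\frac{\sigma}{p}J(\lambda,u)$, i.e.\ $rJ(r,v)=\lambda J(\lambda,u)$; your $\varepsilon$ is in fact computed with this one. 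Second, the middle factor $\frac{\kappa\sigma}{p\kappa\sigma-p\lambda}$ in your display should read $\frac{\kappa}{\kappa\sigma-\lambda}=\frac{1}{\nu\lambda-D}$. For the time exponent, do not rely on scaling, since amplitude scaling says nothing when $D=0$. Compute it directly: since $\lambda/\lambda'=at/\lambda$, the coefficient of $\log t$ is $-\frac{a\nu}{a\nu-D}\cdot\frac{a\nu-D}{a}\int_a^\infty\frac{d\xi}{(\nu\xi-D)^2}=-\frac{1}{a\nu-D}$.

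Where you go beyond the paper is the range $1\le a<1+q$. The paper simply starts at $\lambda(0)=a$, although Lemma \ref{difffunc} (through Lemma \ref{Lem1}) is only available for $\lambda\ge 1+q$. This range includes $a=1$, which is the case Theorem \ref{mainthmint} needs. Monotonicity alone does not reduce you to $a\ge 1+q$, and interpolating at the end would bring in $\|u_0\|_{L^\infty}$. Your absorption argument, however, does close, as follows.
\begin{itemize}
\item Set $a'=1+q$ and apply the $a'$-bound on $[s/2,s]$.
\item Use $\|u(s/2)\|_{L^{a'}}\le\|u(s/2)\|_{L^\infty}^{1-a/a'}\|u_0\|_{L^a}^{a/a'}$, which relies on Lemma \ref{monl1}.
\item Let $Y=\sup_{0<s\le T}s^{\frac{1}{a\nu-D}}\|u(s)\|_{L^\infty}$, which is finite because $u$ is bounded.
\item $Y$ then appears on the right with power $\theta=\frac{(a'-a)\nu}{a'\nu-D}$, and $\theta<1$ exactly because $a\nu>D$.
\item The powers of $s$ cancel, since $\frac{1-\theta}{a\nu-D}=\frac{1}{a'\nu-D}$.
\item Solving for $Y$ gives $\|u_0\|_{L^a}$ to the power $\frac{a'\nu}{a'\nu-D}\cdot\frac{a}{a'}\cdot\frac{1}{1-\theta}=\frac{a\nu}{a\nu-D}$, with a constant independent of $T$.
\end{itemize}
So your proposal is correct and, on this point, more complete than the paper's proof.
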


\begin{proof}
Let $\lambda(s)$ and $\sigma(s)$ be as in Lemma \ref{difffunc} such that $\lambda(0)=a$ and $\lambda(s)\to +\infty$ as $s\to t$. Then we have from (\ref{logdiff}) that $$\frac{d}{ds}\log ||u(s)||_{L^{\lambda(s)}}\leq\frac{\lambda^{\prime}(s)}{\lambda(s)}J(\lambda(s), u(s))-\frac{c_{1}(\lambda(s)-1)}{\sigma(s)^{p}}\frac{||\nabla \left( u^{\sigma(s)/p } \right)||_{L^{p}}^{p}}{||u(s)||_{L^{\lambda(s)}}^{\lambda(s)}}.$$
With $v=u^{\sigma(s)/p}\in W^{1, p}(M)$ (see (\ref{valpha})) it follows from (\ref{logsobolevin}) that, for any $\varepsilon>0$ and any $r\in (0, p\kappa)$,
$$||\nabla u^{\sigma(s)/p}||_{L^{p}}^{p}\geq \frac{(p\kappa-r)||u^{\sigma(s)/p}||_{L^{r}}^{p}}{r\kappa S_{M}\varepsilon}\left(rJ\left(r, u^{\sigma(s)/p}\right)+\frac{r\kappa}{p\kappa-r}\log \varepsilon\right).$$
Combining these inequalities, we deduce \begin{align*}\frac{d}{ds}\log ||u(s)||_{L^{\lambda(s)}}\leq \frac{\lambda^{\prime}(s)}{\lambda(s)}J(\lambda(s), u(s))&-\frac{c_{1}(\lambda(s)-1)(p\kappa-r)||u^{\sigma(s)/p}||_{L^{r}}^{p}}{\sigma(s)^{p}||u(s)||_{L^{\lambda(s)}}^{\lambda(s)}r\kappa S_{M}\varepsilon}\\&\times\left(rJ\left(r, u^{\sigma(s)/p}\right)+\frac{r\kappa}{p\kappa-r}\log \varepsilon\right).\end{align*}
Let us choose \begin{equation}\label{epsilon}\varepsilon=\frac{\lambda(s)^{2}}{\lambda^{\prime}(s)}\frac{c_{1}(\lambda(s)-1)(p\kappa-r)}{\sigma(s)^{p}r\kappa S_{M}}\frac{||u^{\sigma(s)/p}||_{L^{r}}^{p}}{||u(s)||_{L^{\lambda(s)}}^{\lambda(s)}}=:\varepsilon_{1}\frac{||u^{\sigma(s)/p}||_{L^{r}}^{p}}{||u(s)||_{L^{\lambda(s)}}^{\lambda(s)}}.\end{equation} For such $\varepsilon$ we obtain $$\frac{d}{ds}\log ||u(s)||_{L^{\lambda(s)}}\leq \frac{\lambda^{\prime}(s)}{\lambda(s)^{2}}\left[\lambda(s)J(\lambda(s), u(s))-rJ\left(r, u^{\sigma(s)/p}\right)\right]-\frac{\lambda^{\prime}(s)}{\lambda(s)^{2}}\frac{r\kappa}{p\kappa-r}\log \varepsilon.$$
Since $$\lambda(s)J(\lambda(s), u(s))-rJ\left(r, u^{\sigma(s)/p}\right)=J\left(1, u(s)^{\lambda(s)}\right)-J\left(1, u^{(\sigma(s)r)/p}\right),$$ we obtain that this term is zero if $$r=\frac{\lambda(s)p}{\sigma(s)}.$$
Note that $r<p\kappa$ is then equivalent to $\lambda(s)>D\frac{\kappa}{\kappa-1}=\frac{D}{\nu}$, which holds true because, by our assumption (\ref{ainf}), $\lambda(0)=a>\frac{D}{\nu}$ and since $\lambda$ is non-decreasing. 
With this choice of $r$ we have $$\frac{r\kappa}{p\kappa-r}=\frac{\lambda(s)}{\lambda(s)\nu-D}$$ and thus \begin{align*}\frac{d}{ds}\log ||u(s)||_{L^{\lambda(s)}}&\leq -\frac{\lambda^{\prime}(s)}{\lambda(s)}\frac{1}{\lambda(s)\nu-D}\log \varepsilon\\&=-\frac{\lambda^{\prime}(s)}{\lambda(s)}\frac{1}{\lambda(s)\nu-D}\left(\log \varepsilon_{1}+\log\left(\frac{||u||_{L^{\lambda(s)}}^{\sigma(s)}}{||u||_{L^{\lambda(s)}}^{\lambda(s)}}\right)\right)\\&=-\frac{\lambda^{\prime}(s)}{\lambda(s)}\frac{1}{\lambda(s)\nu-D}\left(\log \varepsilon_{1}-D\log ||u||_{L^{\lambda(s)}}\right).\end{align*}
Hence, \begin{align}\nonumber\frac{d}{ds}\log ||u(s)||_{L^{\lambda(s)}}\leq \frac{\lambda^{\prime}(s)}{\lambda(s)}&\frac{D}{\lambda(s)\nu-D}\log ||u||_{L^{\lambda(s)}}\\&\label{diffforlog}-\frac{\lambda^{\prime}(s)}{\lambda(s)}\frac{1}{\lambda(s)\nu-D}\log\left(\frac{\lambda(s)}{\lambda^{\prime}(s)}\frac{c_{1}(\lambda(s)-1)(\lambda(s)\nu-D)}{(\lambda(s)-D)^{p} S_{M}}\right).
\end{align}
Therefore, setting $\Phi(s)=\log ||u(s)||_{L^{\lambda(s)}}$, we can write (\ref{diffforlog}) as $$\frac{d}{ds}\Phi(s)+f(s)\Phi(s)+g(s)\leq 0,$$ where $$f(s)=-\frac{\lambda^{\prime}(s)}{\lambda(s)}\frac{D}{\lambda(s)\nu-D}\quad \textnormal{and}\quad g(s)=\frac{\lambda^{\prime}(s)}{\lambda(s)}\frac{1}{\lambda(s)\nu-D}\log\left(\frac{\lambda(s)}{\lambda^{\prime}(s)}\frac{c_{1}(\lambda(s)-1)(\lambda(s)\nu-D)}{(\lambda(s)-D)^{p} S_{M}}\right).$$
The ODE $$\frac{d}{ds}\Psi(s)+f(s)\Psi(s)+g(s)= 0$$ has an explicit solution of the form $$\Psi(s)=\exp\left(-\int_{0}^{s}f(\omega)d\omega\right)\left[\Psi(0)-\int_{0}^{s}g(\omega)\exp\left(\int_{0}^{\omega}f(\eta)d\eta\right)d\omega\right].$$
Using the substitution $\xi=\lambda(\omega)$ we have \begin{align*}\int_{0}^{s}f(\omega)d\omega&=-\int_{0}^{s}\frac{\lambda^{\prime}(\omega)}{\lambda(\omega)}\frac{D}{\lambda(\omega)\nu-D}d\omega=-\int_{\lambda(0)}^{\lambda(s)}\frac{D}{\xi(\xi\nu-D)}d\xi\\&=\left[\log\frac{\xi}{\xi\nu-D}\right]_{a}^{\lambda(s)}=\log\left(\frac{\lambda(s)(a\nu-D)}{a(\lambda(s)\nu-D)}\right).
\end{align*}
Since $\lambda(s)\to +\infty$ as $s\to t$, we get that $$\lim_{s\to t}\int_{0}^{s}f(\omega)d\omega= \log\left(\frac{a\nu-D}{a\nu}\right).$$
Further, we have \begin{align*}
\int_{0}^{s}g(\omega)&\exp\left(\int_{0}^{\omega}f(\eta)d\eta\right)d\omega\\&=\frac{a\nu-D}{a}\int_{0}^{s}\frac{\lambda^{\prime}(\omega)}{(\lambda(\omega)\nu-D)^{2}}\left[\log\left(\frac{\lambda(\omega)^{2}c_{1}}{\lambda^{\prime}(\omega)S_{\Omega}}\right)+\log\left(\frac{(\lambda(\omega)-1)(\lambda(\omega)\nu-D)}{\lambda(\omega)(\lambda(\omega)-D)^{p}}\right)\right]d\omega\\&=:\frac{a\nu-D}{a}(I_{1}(s)+I_{2}(s))
\end{align*}
Let us choose $\lambda(s)=\frac{at}{t-s}$. Using that $\lambda^{\prime}(s)=\frac{at}{(t-s)^{2}}$, we get since $\lambda(0)=a$, $$I_{1}(s)=\log\left(\frac{at c_{1}}{S_{M}}\right)\int_{0}^{s}\frac{\lambda(\omega)d\omega}{(\lambda(\omega)\nu-D)^{2}}=\log\left(\frac{ac_{1}t}{S_{M}}\right)\left(\frac{1}{\nu(a\nu-D)}-\frac{1}{\nu(\lambda(s)\nu-D)}\right).$$
Again, using the substitution $\xi=\lambda(s)$, we have $$I_{2}(s)=\int_{a}^{\lambda(s)}\frac{1}{(\xi\nu-D)^{2}}\log\left(\frac{(\xi-1)(\xi\nu-D)}{\xi(\xi-D)^{p}}\right)d\xi.$$ Note that the integrand has no singularities in the domain of integration by our assumption on $\lambda$ and $a$. Hence, the integral converges as $s\to t$, that is, as $\lambda(s)\to +\infty$.
Therefore, we deduce $$\int_{0}^{s}g(\omega)\exp\left(\int_{0}^{\omega}f(\eta)d\eta\right)d\omega=\frac{a\nu-D}{a}\log\left(t\right)\left(\frac{1}{\nu(a\nu-D)}-\frac{1}{\nu(\lambda(s)\nu-D)}\right)+G(s),$$ where $G(s)$ is a function depending $c_{1}, S_{M}, a, p, q, \nu$ that converges to some constant when $s\to t$.
Thus, \begin{align*}\Psi(s)&=\frac{a(\lambda(s)\nu-D)}{\lambda(s)(a\nu-D)}\Psi(0)-\frac{\lambda(s)\nu-D}{\lambda(s)}\log\left(t\right)\left(\frac{1}{\nu(a\nu-D)}-\frac{1}{\nu(\lambda(s)\nu-D)}\right)+\frac{a(\lambda(s)\nu-D)}{\lambda(s)(a\nu-D)}G(s)\\&=\frac{a(\lambda(s)\nu-D)}{\lambda(s)(a\nu-D)}\Psi(0)-\frac{\lambda(s)-a}{\lambda(s)(a\nu-D)}\log(t)+\frac{a(\lambda(s)\nu-D)}{\lambda(s)(a\nu-D)}G(s).\end{align*}
Sending $s\to t$, that is, $\lambda(s)\to +\infty$, we get $$\Psi(t)=\frac{a\nu}{a\nu-D}\Psi(0)-\frac{1}{a\nu-D}\log t+C,$$ where $C$ depends on $c_{1}, S_{M}, a, p, q, \nu$.
Hence, $$\log ||u(t)||_{L^{\infty}}=\lim_{s\to t}\log ||u(t)||_{L^{\lambda(s)}}\leq \lim_{s\to t}\log ||u(s)||_{L^{\lambda(s)}}=\lim_{s\to t}\Phi(s)\leq \lim_{s\to t}\Psi(s)=\Psi(t),$$ so that letting $\Psi(0)=\Phi(0)=\log||u_{0}||_{L^{a}}$ we conclude $$||u(t)||_{L^{\infty}}\leq e^{C}||u_{0}||_{L^{a}}^{\frac{a\nu}{a\nu-D}}t^{-\frac{1}{a\nu-D}},$$ which implies (\ref{decayofinfthm}) and finishes the proof.
\end{proof}

From interpolation we get the following result:

\begin{corollary}\label{corlambda}
Under the assumptions of Theorem \ref{thmfinexinf}, for any $\lambda\geq 1+q$ and $a\geq 1$ satisfying \begin{equation}\label{lambdanew}\lambda> a>\frac{D}{\nu} ,\end{equation} we have, for all $t>0$,
\begin{equation}\label{forequi}||u(t)||_{L^{\lambda}(M)}\leq C ||u_{0}||_{L^{a}(M)}^{\gamma}t^{-\alpha},\end{equation} where \begin{equation}\label{gammanew}\alpha=\frac{1}{\lambda}\frac{\lambda-a}{a\nu-D}\quad\textnormal{and}\quad\gamma=\frac{a}{\lambda}\frac{\lambda \nu-D}{a\nu-D}.\end{equation}
\end{corollary}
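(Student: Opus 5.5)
The plan is to interpolate between the $L^{a}$ and $L^{\infty}$ norms of $u(t)$, using Theorem \ref{thmfinexinf} for the $L^{\infty}$ norm and Lemma \ref{monl1} for the $L^{a}$ norm. Since $\lambda>a$, Hölder's inequality gives
\begin{equation*}
||u(t)||_{L^{\lambda}}\leq ||u(t)||_{L^{a}}^{a/\lambda}\,||u(t)||_{L^{\infty}}^{1-a/\lambda}.
\end{equation*}
This holds because $\int u^{\lambda}\leq ||u||_{L^{\infty}}^{\lambda-a}\int u^{a}$. The two factors are then bounded separately.

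For the first factor, Lemma \ref{monl1} with exponent $a\geq 1$ shows that $t\mapsto ||u(t)||_{L^{a}}$ is non-increasing, so $||u(t)||_{L^{a}}\leq ||u_{0}||_{L^{a}}$. For the second factor, $a\geq1$ and $a>D/\nu$ are exactly the hypotheses of Theorem \ref{thmfinexinf}, so
\begin{equation*}
||u(t)||_{L^{\infty}}\leq C||u_{0}||_{L^{a}}^{\frac{a\nu}{a\nu-D}}t^{-\frac{1}{a\nu-D}}.
\end{equation*}
The assumption $\lambda\geq 1+q$ is not needed for this argument. It is presumably imposed only for later use, for instance together with Lemma \ref{Lem1}.

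Substituting both bounds gives the power of $t$ as
\begin{equation*}
-\frac{\lambda-a}{\lambda}\cdot\frac{1}{a\nu-D},
\end{equation*}
which equals $-\alpha$. The power of $||u_{0}||_{L^{a}}$ is
\begin{equation*}
\frac{a}{\lambda}+\frac{\lambda-a}{\lambda}\cdot\frac{a\nu}{a\nu-D}=\frac{a}{\lambda}\cdot\frac{a\nu-D+(\lambda-a)\nu}{a\nu-D}=\frac{a}{\lambda}\cdot\frac{\lambda\nu-D}{a\nu-D}=\gamma.
\end{equation*}
The constant is $C^{1-a/\lambda}$, which depends only on the admissible parameters. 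This gives (\ref{forequi}) with $\alpha$ and $\gamma$ as in (\ref{gammanew}).

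There is no real obstacle here. The only points needing care are that Lemma \ref{monl1} applies to the exponent $a$ (it does, since $a\geq1$) and that the exponent algebra produces exactly the stated $\gamma$, which the computation above confirms.
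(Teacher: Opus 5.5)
Your proof is correct and is exactly the paper's argument: Hölder interpolation $\|u(t)\|_{L^{\lambda}}\leq \|u(t)\|_{L^{\infty}}^{1-a/\lambda}\|u(t)\|_{L^{a}}^{a/\lambda}$, with Lemma \ref{monl1} bounding $\|u(t)\|_{L^{a}}$ by $\|u_{0}\|_{L^{a}}$ and Theorem \ref{thmfinexinf} bounding $\|u(t)\|_{L^{\infty}}$. Your exponent computation for $\gamma$ is also correct, and cleaner than the paper's, whose displayed intermediate exponent has a stray factor $a$ in front of $\frac{a\nu}{a\nu-D}(1-\frac{a}{\lambda})$.
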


\begin{proof}
We have \begin{align*}
||u(t)||_{L^{\lambda}(M)}\leq ||u(t)||_{L^{\infty}(M)}^{1-\frac{a}{\lambda}}||u(t)||_{L^{a}(M)}^{\frac{a}{\lambda}}.
\end{align*}
Since $||u(t)||_{L^{a}(M)}\leq ||u_{0}||_{L^{a}(M)}$ by Lemma \ref{monl1}, we obtain from (\ref{decayofinfthm}) $$||u(t)||_{L^{\lambda}(M)}\leq C ||u_{0}||_{L^{a}(M)}^{a\frac{a\nu}{a\nu-D}(1-\frac{a}{\lambda})+\frac{a}{\lambda}}t^{-\frac{1}{a\nu-D}(1-\frac{a}{\lambda})},$$ which implies (\ref{forequi}).
\end{proof}

\begin{remark}
Let $M=\mathbb{R}^{n}$, $n>p$, so that $\kappa=\frac{n}{n-p}$, then $$||u(t)||_{L^{\infty}(\mathbb{R}^{n})}\leq C ||u_{0}||_{L^{a}(\mathbb{R}^{n})}^{\frac{ap}{ap-nD}}t^{-\frac{n}{ap-nD}}$$ and $$||u(t)||_{L^{\lambda}(\mathbb{R}^{n})}\leq C||u_{0}||_{L^{a}(\mathbb{R}^{n})}^{\frac{a(p\lambda-nD)}{\lambda(ap-nD)}}t^{-\frac{n(\lambda-a)}{\lambda(ap-nD)}},$$ which matches the estimate for the Barenblatt solution (\ref{upperBar}) and the optimal estimates obtained in \cite{bonforte2006super} in the case $D\leq 0$.
\end{remark}

\section{Sobolev inequalities}
\label{secsobo}

	\begin{theorem}\label{sobolev}
	Assume that \begin{equation}\label{condforsobo}1+q>\frac{D}{\nu},\end{equation} where $\nu$ is defined by (\ref{nu}) for some $\kappa>1$.
	Let $u$ be a non-negative bounded solution of (\ref{dtv}) in $M\times \mathbb{R}_{+}$ with $u(\cdot, 0)=u_{0}\in L^{1}(M)\cap L^{\infty}(M)$. Suppose that
	\begin{equation}\label{condforsoboinf}||u(t)||_{L^{\infty}(M)}\leq C ||u_{0}||_{L^{a}(M)}^{\frac{a\nu}{a\nu-D}}t^{-\frac{1}{a\nu-D}}
	\end{equation}	
	where $a\geq 1$ is such that  \begin{equation*}\frac{D}{\nu}<a<1+q.\end{equation*}
	Then the Sobolev inequality (\ref{weightsobo}) holds in $M$ with Sobolev exponent $\kappa$.
	\end{theorem}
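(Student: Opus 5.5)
The plan is to combine the energy identity of Lemma \ref{Lem1} with the decay hypothesis (\ref{condforsoboinf}) to get a Nash (Gagliardo--Nirenberg) type inequality for $v=u_0^q$. That inequality is then upgraded to the full Sobolev inequality (\ref{weightsobo}) by the truncation method of Bakry--Coulhon--Ledoux--Saloff-Coste ("Sobolev inequalities in disguise").

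\emph{Step 1 (energy identity at $\lambda=1+q$).} Apply Lemma \ref{Lem1} with $\lambda=1+q$. Then $\sigma=1+q-D=pq$, so $u^{\sigma/p}=u^q$, and on $[0,t]$
\[
\int_M u_0^{1+q}\,d\mu-\int_M u(t)^{1+q}\,d\mu=c\int_0^t\!\!\int_M|\nabla u^q|^p\,d\mu\,ds\le c\,t\int_M|\nabla u_0^q|^p\,d\mu ,
\]
where the last inequality is Lemma \ref{nabdecreasing}. Since $a<1+q$, Lemma \ref{monl1} and (\ref{condforsoboinf}) give
\[
\int_M u(t)^{1+q}\le \|u(t)\|_{L^\infty}^{1+q-a}\|u_0\|_{L^a}^a\le C\|u_0\|_{L^a}^{a+\frac{(1+q-a)a\nu}{a\nu-D}}\,t^{-\frac{1+q-a}{a\nu-D}} .
\]
Together these yield, for every $t>0$,
\[
\|u_0\|_{L^{1+q}}^{1+q}\le c\,t\,\|\nabla u_0^q\|_{L^p}^p+C\|u_0\|_{L^a}^{\beta}\,t^{-\frac{1+q-a}{a\nu-D}},
\]
where $\beta$ is the exponent displayed above.

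\emph{Step 2 (Nash-type inequality).} Minimize the right-hand side over $t>0$. Writing $v=u_0^q$, this gives an inequality of the form
\[
\|v\|_{L^{(1+q)/q}}^{(1+q)/q}\le C\,\|\nabla v\|_{L^p}^{p\theta}\,\|v\|_{L^{a/q}}^{(1-\theta)\beta'} ,\qquad \theta=\frac{1+q-a}{1+q-a+a\nu-D}\in(0,1).
\]
I will then check that the exponents are compatible with the scaling $v\mapsto cv$. The dimension parameter hidden in this inequality should be exactly $\kappa$ (equivalently $\nu$). This is a bookkeeping check that condition (\ref{condforsobo}) and $a>D/\nu$ are precisely what make $\theta\in(0,1)$ and all denominators positive.

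To use this for arbitrary $v$, every non-negative $v\in W^{1,p}\cap L^\infty$ with compact support must be realized as $u_0^q$ for some admissible initial datum $u_0=v^{1/q}\in L^1\cap L^\infty$. I will invoke the existence results cited in Section \ref{secweak} for a bounded weak solution with this datum, and use that $u_0^q\in W^{1,p}$. By density, and by replacing $v$ with $|v|$, the inequality then extends to all $v\in W^{1,p}(M)$.

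\emph{Step 3 (from Nash to Sobolev).} A Gagliardo--Nirenberg inequality with the right homogeneity implies a weak-type Sobolev inequality, and hence the strong form. The proof applies the inequality to the slices $v_k=\min\{(v-2^k)_+,2^k\}$ of $v\ge0$, for which $\nabla v_k=\nabla v\,\mathbf 1_{\{2^k<v\le 2^{k+1}\}}$. For each slice, the $L^{a/q}$ and $L^{(1+q)/q}$ norms are controlled by $2^k$ times powers of $\mu(\{v>2^k\})$. Summing over $k\in\mathbb Z$ and using Hölder's inequality in $k$ then recovers $\|v\|_{L^{p\kappa}}^p\le S_M\|\nabla v\|_{L^p}^p$.

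I expect Step 3 to be the main obstacle. The exponents $a/q$ and $(1+q)/q$ need not be $\ge1$, since $q$ may be large, so the BCLS machinery must be applied in its general quasi-norm form $0<r<s\le\infty$, which that theorem does allow. The remaining care is in verifying that the scaling exponent produced in Step 2 gives exactly $\kappa$. A secondary technical point is the existence and regularity of solutions with initial datum $v^{1/q}$ needed in Step 2.
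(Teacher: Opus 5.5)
Your proposal is correct and follows the paper's proof essentially step for step. The paper likewise uses the energy identity at $\lambda=1+q$ together with Lemma \ref{nabdecreasing}, interpolates $\int_M u(t)^{1+q}$ between $L^a$ and the assumed $L^\infty$ decay, optimizes $At^{-\beta}+Bt$ in $t$ to get a Gagliardo--Nirenberg inequality for $v=u_0^q$, and then invokes Theorem 3.1 of Bakry--Coulhon--Ledoux--Saloff-Coste with the exponent bookkeeping giving $\omega=p/(1-\nu)=p\kappa$. The only differences are that you sketch the slicing argument instead of citing the theorem, and you flag the realization of arbitrary $v$ as $u_0^q$ and the quasi-norm exponent $a/q$ (note $(1+q)/q>1$ always), which the paper passes over silently.
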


\begin{remark}
Clearly, if $D<\nu$, condition (\ref{condforsobo}) is automatically satisfied. If $p=2$, $n>2$ and $\kappa=\frac{n}{n-2}$ condition (\ref{condforsobo}) becomes $q>\frac{n-2}{n+2}$.
\end{remark}

\begin{proof}
From Lemma \ref{Lem1} we have, for any $t>0$, \begin{equation*}
	\left[ \int_{M }u^{1+q } d\mu\right] _{0}^{t}+c_{1}%
	\int_{M\times [0, t]}\left\vert \nabla  u^{q } \right\vert
	^{p}d\mu dt=0.
\end{equation*}
Then we obtain by Lemma \ref{nabdecreasing} \begin{align*}
	\int_{M }u^{1+q } d\mu(t)\geq -c_{1}t
	\int_{M}\left\vert \nabla  u_{0}^{q } \right\vert
	^{p}d\mu +\int_{M }u_{0}^{1+q } d\mu.
\end{align*}
We can prove as in Corollary \ref{corlambda} that our assumption (\ref{condforsoboinf}) implies (\ref{forequi}).
Hence, we can apply (\ref{forequi}) with $\lambda=1+q$ and $\alpha$ and $\gamma$ given by (\ref{gammanew}) with $\lambda=1+q$  respectively that
\begin{equation}\label{decayofnormq+1}\int_{M }u^{1+q }(t) \leq C\left(\int_{M}u_{0}^{a}\right)^{\gamma(1+q)/a}t^{-\alpha(1+q)}.\end{equation}
Let us set $\beta:=\alpha(1+q)$ and $\zeta:=\gamma(1+q)/a$.
Then, we obtain from (\ref{decayofnormq+1}), $$C\left(\int_{M}u_{0}^{a}\right)^{\zeta}t^{-\beta}\geq -c_{1}t%
\int_{M}\left\vert \nabla  u_{0}^{q } \right\vert
^{p}d\mu  +\int_{M }u_{0}^{1+q } d\mu,$$ that is, $$f(t)=At^{-\beta}+Bt\geq c,$$ where $$A=C\left(\int_{M}u_{0}^{a}\right)^{\zeta}, \quad B=c_{1}%
\int_{M}\left\vert \nabla  u_{0}^{q } \right\vert
^{p}d\mu\quad \textnormal{and}\quad c=\int_{M }u_{0}^{1+q } d\mu.$$ Since $f^{\prime}(t)=-A\beta t^{-\beta -1}+B$, the function $f$ has a minimum at $$t_{0}=\left(\frac{A\beta}{B}\right)^{\frac{1}{\beta+1}},$$ and for this $t_{0}$ $$f(t_{0})=KA^{\frac{1}{\beta+1}}B^{\frac{\beta}{\beta+1}},$$ where $K$ is a positive constant. Therefore, we have $f(t)\geq f(t_{0})\geq c$, that means, \begin{equation*}K_{1}\left(\int_{M}u_{0}^{a}\right)^{\frac{\zeta}{\beta+1}}\left(\int_{M}\left\vert \nabla  u_{0}^{q } \right\vert^{p}d\mu\right)^{\frac{\beta}{\beta+1}}\geq\int_{M }u_{0}^{1+q } d\mu.\end{equation*} Setting $v=u_{0}^{q}\in W^{1, p}(M)$ yields \begin{equation*}K_{1}\left(\int_{M}v^{\frac{a}{q}}\right)^{\frac{\zeta}{\beta+1}}\left(\int_{M}\left\vert \nabla  v \right\vert^{p}d\mu\right)^{\frac{\beta}{\beta+1}}\geq\int_{M }v^{\frac{1+q}{q} } d\mu.\end{equation*}
Thus, \begin{equation}\label{GNI}\left(\int_{M }v^{\frac{1+q}{q} } d\mu\right)^{\frac{q}{1+q}}\leq K_{2}\left(\int_{M}v^{\frac{a}{q}}\right)^{\frac{\zeta q}{(\beta+1)(1+q)}}\left(\int_{M}\left\vert \nabla  v \right\vert^{p}d\mu\right)^{\frac{\beta q}{(\beta+1)(1+q)}}.\end{equation}
Note that $$1-\frac{\beta qp}{(1+q)(\beta+1)}=\frac{\zeta a}{(1+q)(\beta+1)}$$ since $$\beta=\frac{a\zeta-(1+q)}{D}.$$
Hence, we can write (\ref{GNI}) as \begin{equation}\label{GNI2}
||v||_{L^{r}(M)}\leq K_{2} ||v||_{L^{s}(M)}^{1-\theta}||\nabla v||_{L^{p}(M)}^{\theta},
\end{equation} where $$r=\frac{1+q}{q}, \quad \theta=\frac{\beta q p}{(1+q)(\beta+1)}\quad \textnormal{and}\quad s=\frac{a}{q}.$$ From Theorem 3.1 in \cite{bakry1995sobolev} it therefore follows that \begin{equation}\label{sobo}||v||_{L^{\omega}(M)}\leq C||\nabla v||_{L^{p}(M)},
\end{equation} with $\omega$ given by $$\frac{\theta}{\omega}=\frac{1}{r}-\frac{1-\theta}{s}.$$
Substituting $r, \theta$ and $s$ in this equation yields $$\omega=\frac{p}{1-\nu}=p\kappa,$$ which implies the Sobolev inequality (\ref{weightsobo}) and finishes the proof.
\end{proof}
	
	\begin{da} \normalfont
		This article has no associated data.
	\end{da}
	
	\bibliographystyle{abbrv}
	\bibliography{librarycacc}

\begin{thebibliography}{10}

\bibitem{alt1983quasilinear}
H.~W. Alt and S.~Luckhaus.
\newblock Quasilinear elliptic-parabolic differential equations.
\newblock {\em Math. z}, 183(3):311--341, 1983.

\bibitem{andreucci1990new}
D.~Andreucci and E.~Di~Benedetto.
\newblock A new approach to initial traces in nonlinear filtration.
\newblock In {\em Annales de l'Institut Henri Poincar{\'e} C, Analyse non
  lin{\'e}aire}, volume~7, pages 305--334. Elsevier, 1990.

\bibitem{andreucci2015optimal}
D.~Andreucci and A.~F. Tedeev.
\newblock Optimal decay rate for degenerate parabolic equations on noncompact
  manifolds.
\newblock {\em Methods Appl. Anal}, 22(4):359--376, 2015.

\bibitem{bakry1995sobolev}
D.~Bakry, T.~Coulhon, M.~Ledoux, and L.~Saloff-Coste.
\newblock Sobolev inequalities in disguise.
\newblock {\em Indiana University Mathematics Journal}, pages 1033--1074, 1995.

\bibitem{barbu2025leibenson}
V.~Barbu, S.~Grube, M.~Rehmeier, and M.~R{\"o}ckner.
\newblock The {L}eibenson process.
\newblock {\em arXiv preprint arXiv:2508.12979}, 2025.

\bibitem{barenblatt1952self}
G.~I. Barenblatt.
\newblock On self-similar motions of a compressible fluid in a porous medium.
\newblock {\em Akad. Nauk SSSR. Prikl. Mat. Meh}, 16(6):679--698, 1952.

\bibitem{benilan1995strong}
P.~B{\'e}nilan and R.~Gariepy.
\newblock Strong solutions in {L}1 of degenerate parabolic equations.
\newblock {\em Journal of differential equations}, 119(2):473--502, 1995.

\bibitem{bonforte2006super}
M.~Bonforte and G.~Grillo.
\newblock Super and ultracontractive bounds for doubly nonlinear evolution
  equations.
\newblock {\em Rev. Mat. Iberoamericana}, 22(1):111--129, 2006.

\bibitem{bonforte2007singular}
M.~Bonforte and G.~Grillo.
\newblock Singular evolution on manifolds, their smoothing properties, and
  {S}obolev inequalities.
\newblock {\em Discrete Contin. Dyn. Syst}, 2007:130--137, 2007.

\bibitem{bonforte2008fast}
M.~Bonforte, G.~Grillo, and J.~L. Vazquez.
\newblock Fast diffusion flow on manifolds of nonpositive curvature.
\newblock {\em Journal of Evolution Equations}, 8(1):99--128, 2008.

\bibitem{cipriani2001uniform}
F.~Cipriani and G.~Grillo.
\newblock Uniform bounds for solutions to quasilinear parabolic equations.
\newblock {\em Journal of Differential Equations}, 177(1):209--234, 2001.

\bibitem{coulhon2016regularisation}
T.~Coulhon and D.~Hauer.
\newblock Regularisation effects of nonlinear semigroups.
\newblock {\em arXiv preprint arXiv:1604.08737}, 2016.

\bibitem{de2022wasserstein}
N.~De~Ponti, M.~Muratori, and C.~Orrieri.
\newblock Wasserstein stability of porous medium-type equations on manifolds
  with {R}icci curvature bounded below.
\newblock {\em Journal of Functional Analysis}, 283(9):109661, 2022.

\bibitem{grigor2026sharp}
A.~Grigor'yan, J.~Sun, and P.~S{\"u}rig.
\newblock Sharp long distance upper bounds for solutions of {L}eibenson's
  equation on {R}iemannian manifolds.
\newblock {\em arXiv preprint arXiv:2603.27791}, 2026.

\bibitem{Grigoryan2024}
A.~Grigor'yan and P.~S{\"u}rig.
\newblock Sharp propagation rate for {L}eibenson’s equation on {R}iemannian
  manifolds.
\newblock {\em Ann. Scuola Norm. Super. Pisa}, 2024.

\bibitem{Grigoryan2024a}
A.~Grigor'yan and P.~S{\"u}rig.
\newblock Upper bounds for solutions of {L}eibenson's equation on {R}iemannian
  manifolds.
\newblock {\em Journal of Functional Analysis}, 288(10):110878, 2025.

\bibitem{grigor2006heat}
A.~Grigor’yan.
\newblock Heat kernels on weighted manifolds and applications.
\newblock {\em Cont. Math}, 398(2006):93--191, 2006.

\bibitem{grigor2024finite}
A.~Grigor’yan and P.~S{\"u}rig.
\newblock Finite propagation speed for {L}eibenson’s equation on {R}iemannian
  manifolds.
\newblock {\em Comm. Anal. Geom}, 32(9):2467--2504, 2024.

\bibitem{hoffman1974sobolev}
D.~Hoffman and J.~Spruck.
\newblock Sobolev and isoperimetric inequalities for {R}iemannian submanifolds.
\newblock {\em Communications on Pure and Applied Mathematics}, 27(6):715--727,
  1974.

\bibitem{ishige1996existence}
K.~Ishige.
\newblock On the existence of solutions of the cauchy problem for a doubly
  nonlinear parabolic equation.
\newblock {\em SIAM Journal on Mathematical Analysis}, 27(5):1235--1260, 1996.

\bibitem{ivanov1997regularity}
A.~V. Ivanov.
\newblock Regularity for doubly nonlinear parabolic equations.
\newblock {\em Journal of Mathematical Sciences}, 83(1):22--37, 1997.

\bibitem{leibenzon1945general}
L.~Leibenson.
\newblock General problem of the movement of a compressible fluid in a porous
  medium. izv akad. nauk sssr.
\newblock {\em Geography and Geophysics}, 9:7--10, 1945.

\bibitem{leibenson1945turbulent}
L.~Leibenson.
\newblock Turbulent movement of gas in a porous medium.
\newblock {\em Izv. Akad. Nauk SSSR Ser. Geograf. Geofiz}, 9:3--6, 1945.

\bibitem{meglioli2025global}
G.~Meglioli, F.~Oliva, and F.~Petitta.
\newblock Global existence for a {L}eibenson type equation with reaction on
  {R}iemannian manifolds.
\newblock {\em Nonlinear Analysis}, 263:113967, 2026.

\bibitem{raviart1970resolution}
P.-A. Raviart.
\newblock Sur la r{\'e}solution de certaines {\'e}quations paraboliques non
  lin{\'e}aires.
\newblock {\em Journal of Functional Analysis}, 5(2):299--328, 1970.

\bibitem{surig2024finite}
P.~S{\"u}rig.
\newblock Finite extinction time for subsolutions of the weighted {L}eibenson
  equation on {R}iemannian manifolds.
\newblock {\em arXiv preprint arXiv:2412.06496}, 2024.

\bibitem{surig2024sharp}
P.~S{\"u}rig.
\newblock Sharp sub-{G}aussian upper bounds for subsolutions of {T}rudinger’s
  equation on {R}iemannian manifolds.
\newblock {\em Nonlinear Analysis}, 249:113641, 2024.

\bibitem{surig2025gradient}
P.~S{\"u}rig.
\newblock Gradient estimates for {L}eibenson's equation on {R}iemannian
  manifolds.
\newblock {\em arXiv preprint arXiv:2506.07221}, 2025.

\bibitem{surig2026existence}
P.~S{\"u}rig.
\newblock Existence results for {L}eibenson's equation on {R}iemannian
  manifolds.
\newblock {\em arXiv preprint arXiv:2601.20640}, 2026.

\bibitem{tsutsumi1988solutions}
M.~Tsutsumi.
\newblock On solutions of some doubly nonlinear degenerate parabolic equations
  with absorption.
\newblock {\em Journal of mathematical analysis and applications},
  132(1):187--212, 1988.

\bibitem{varopoulos1985hardy}
N.~T. Varopoulos.
\newblock Hardy-{L}ittlewood theory for semigroups.
\newblock {\em Journal of functional analysis}, 63(2):240--260, 1985.

\end{thebibliography}
	
	\emph{Universit\"{a}t Bielefeld, Fakult\"{a}t f\"{u}r Mathematik, Postfach
		100131, D-33501, Bielefeld, Germany}
	
	\texttt{philipp.suerig@uni-bielefeld.de}
\end{document}